\begin{document}

\font\bbbld=msbm10 scaled\magstep1
\newcommand{\bfR}{\hbox{\bbbld R}}
\newcommand{\bfC}{\hbox{\bbbld C}}
\newcommand{\bfZ}{\hbox{\bbbld Z}}
\newcommand{\bfH}{\hbox{\bbbld H}}
\newcommand{\bfQ}{\hbox{\bbbld Q}}
\newcommand{\bfN}{\hbox{\bbbld N}}
\newcommand{\bfP}{\hbox{\bbbld P}}
\newcommand{\bfT}{\hbox{\bbbld T}}
\def\Sym{\mathop{\rm Sym}}
\newcommand{\halo}[1]{\Int(#1)}
\def\Int{\mathop{\rm Int}}
\def\Re{\mathop{\rm Re}}
\def\Im{\mathop{\rm Im}}
\newcommand{\union}{\cup}
\newcommand{\goesto}{\rightarrow}
\newcommand{\bdy}{\partial}
\newcommand{\n}{\noindent}
\newcommand{\p}{\hspace*{\parindent}}

\newtheorem{theorem}{Theorem}[section]
\newtheorem{assertion}{Assertion}[section]
\newtheorem{proposition}{Proposition}[section]
\newtheorem{lemma}{Lemma}[section]
\newtheorem{definition}{Definition}[section]
\newtheorem{claim}{Claim}[section]
\newtheorem{corollary}{Corollary}[section]
\newtheorem{observation}{Observation}[section]
\newtheorem{conjecture}{Conjecture}[section]
\newtheorem{question}{Question}[section]
\newtheorem{example}{Example}[section]

\newbox\qedbox
\setbox\qedbox=\hbox{$\Box$}
\newenvironment{proof}{\smallskip\noindent{\bf Proof.}\hskip \labelsep}%
                        {\hfill\penalty10000\copy\qedbox\par\medskip}
\newenvironment{remark}{\smallskip\noindent{\bf Remark.}\hskip \labelsep}%
                        {\hfill\penalty10000\copy\qedbox\par\medskip}
\newenvironment{remark1}{\smallskip\noindent{\bf Remark 1.}\hskip \labelsep}%
                        {\hfill\penalty10000\copy\qedbox\par\medskip}
\newenvironment{remark2}{\smallskip\noindent{\bf Remark 2.}\hskip \labelsep}%
                        {\hfill\penalty10000\copy\qedbox\par\medskip}
\newenvironment{proofspec}[1]%
                      {\smallskip\noindent{\bf Proof of Theorem 1.1.}
                        \hskip \labelsep}%
                        {\nobreak\hfill\hfill\nobreak\copy\qedbox\par\medskip}
\newenvironment{proofspec2}[1]%
                      {\smallskip\noindent{\bf Proof of Theorem 1.2.}
                        \hskip \labelsep}%
                        {\nobreak\hfill\hfill\nobreak\copy\qedbox\par\medskip}
\newenvironment{acknowledgements}{\smallskip\noindent{\bf Acknowledgements.}%
        \hskip\labelsep}{}

\setlength{\baselineskip}{1.0\baselineskip}

\title{Constant Mean Curvature Surfaces with Two Ends in Hyperbolic Space}
\author{Wayne Rossman, Katsunori Sato}

\maketitle

\begin{abstract}
We investigate the close relationship between 
minimal surfaces in Euclidean 3-space and constant mean 
curvature 1 surfaces in hyperbolic 3-space.  Just as in 
the case of minimal surfaces in Euclidean 3-space, the only complete 
connected embedded constant mean curvature 1 surfaces with two ends in 
hyperbolic space are well-understood surfaces of revolution -- the catenoid 
cousins.  

In contrast to this, we 
show that, unlike the case of minimal surfaces in Euclidean 
3-space, there do exist complete 
connected immersed constant mean curvature 1 surfaces with two ends in 
hyperbolic space that are not surfaces of revolution -- the genus 1 catenoid 
cousins.  The genus 1 catenoid cousins are of interest because 
they show that, although minimal surfaces in Euclidean 
3-space and constant mean curvature 1 surfaces in hyperbolic 3-space are 
intimately related, there are 
essential differences between these two sets of surfaces.  
The proof we give of existence of the 
genus 1 catenoid cousins is a mathematically rigorous 
verification that the results of a computer experiment are sufficiently 
accurate to imply existence.
\end{abstract}

\section{Introduction}

The main result presented in this paper is motivated primarily by 
a result of Schoen \cite{S}, that the only complete connected 
finite-total-curvature minimal immersions in
$\bfR^3$ with two embedded ends are 
catenoids.  In this paper we investigate the closely related 
case of constant mean curvature (CMC) 1 surfaces with two ends in
hyperbolic space $\bfH^3$.  Other motivations 
are the results of Kapouleas, Korevaar, Kusner, Meeks, and Solomon.
In \cite{KKS} it was shown that any complete
properly embedded nonminimal CMC surface with 
two ends in $\bfR^3$ is a periodic surface of revolution (Delaunay 
surface).
In \cite{K} it was shown that there exist immersed complete 
nonminimal CMC surfaces with two ends in $\bfR^3$ with genus $g \geq 2$.
And in \cite{KKMS} it was shown that any complete
properly embedded CMC $c$ ($c > 1$) surface with
two ends in $\bfH^3$ is a periodic surface of revolution 
(hyperbolic Delaunay surface).

CMC 1 surfaces in $\bfH^3$ are closely 
related to minimal
surfaces in $\bfR^3$.  There is a natural correspondence between
them, known as Lawson's correspondence.  
Let $\cal U$ be a simply-connected region of $\bfC$.
If $x : {\cal U} \rightarrow
\bfR^3$ is a local representation for a minimal surface in $\bfR^3$
with first and second fundamental forms $I$ and $II$, then 
we see (via the Gauss and Codazzi equations and 
the fundamental theorem for surfaces) that there is a well-defined
CMC 1 surface 
$\tilde{x} : {\cal U} \subset \bfC \rightarrow
\bfH^3$ with first and second fundamental forms $I$ and $II + I$.
In addition, CMC 1 surfaces in $\bfH^3$ have a
Weierstrass representation based on a pair of holomorphic functions 
\cite{B},
similar to the Weierstrass representation for minimal surfaces in
$\bfR^3$ (see section 2). 

The following theorem was first proven by Levitt and Rosenberg, 
and holds for any CMC $c$ surface, not just for surfaces 
satisfying $c=1$.  We include a proof here (section 3) for the sake of 
completeness.  

\begin{theorem}
 \cite{LR} Any complete properly embedded 
CMC $c$ surface in $\bfH^3$ with asymptotic boundary 
consisting of at most two points is a surface of revolution.  In
particular, it is homeomorphic to a punctured sphere.
\end{theorem}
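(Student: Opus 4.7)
The plan is to run Alexandrov's method of moving planes, adapted to the hyperbolic setting, and use the maximum principle for the CMC $c$ equation to extract a continuous group of reflective symmetries of $M$. Work in the ball model of $\bfH^3$ and place the two asymptotic boundary points $p,q$ at the north and south poles of $S^2 = \bdy_\infty\bfH^3$, so that the connecting geodesic $\gamma$ is the vertical diameter (the easier cases $|\bdy_\infty M|\le 1$ follow the same outline, using any axis through the single point, or any axis when $M$ is compact). The aim is to show that reflection across every totally geodesic plane containing $\gamma$ preserves $M$; once this is known, those reflections generate all rotations fixing $\gamma$, so $M$ is a surface of revolution, and being embedded with asymptotic boundary exactly $\{p,q\}$ it must be homeomorphic to $S^2\setminus\{p,q\}$.

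For each horizontal diameter $\ell$, consider the one-parameter family $\{\Pi_t\}_{t\in(-1,1)}$ of totally geodesic planes perpendicular to $\ell$ at the point parameterized by $t$; this foliates $\bfH^3$. The key combinatorial observation is that exactly one member of the family, $\Pi_0$, contains $\gamma$: for $t\neq 0$ the asymptotic circle $\bdy_\infty\Pi_t\subset S^2$ neither passes through $\{p,q\}$ nor separates $p$ from $q$, and an elementary check shows $p$ and $q$ are not inverse points of the Euclidean sphere carrying $\Pi_t$, so $R_{\Pi_t}$ neither fixes nor swaps the two asymptotic points. Hence $\Pi_0$ is the unique plane in the family whose reflection preserves $\bdy_\infty M$. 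Since $M$ is properly embedded with $\bdy_\infty M \subset\{p,q\}$, a standard compactness argument in the Euclidean closure $\bfH^3\cup S^2$ gives $\Pi_t\cap M=\emptyset$ for $t$ sufficiently close to either endpoint of $\ell$.

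Now slide $\Pi_t$ inward from one end of $\ell$ and let $t^*$ be the first value at which the reflection across $\Pi_t$ of the already-swept portion of $M$ fails to remain a strict one-sided graph over the portion not yet reached. The interior and boundary maximum principles for the CMC $c$ equation --- two CMC $c$ immersions with aligned mean curvature vectors that are tangent at a common graphical point must coincide in a neighborhood, with unique continuation promoting this to global agreement --- force $R_{\Pi_{t^*}}(M)=M$. Since any such symmetry permutes $\bdy_\infty M=\{p,q\}$ and only $\Pi_0$ in the family has this property, we conclude $t^*=0$, giving reflection symmetry of $M$ across the totally geodesic plane $\Pi_0$ containing $\gamma$. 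Letting $\ell$ rotate through all horizontal diameters yields a continuous one-parameter family of reflective symmetries of $M$ in planes through $\gamma$, generating the full rotation group fixing $\gamma$ pointwise.

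The main obstacle is the opening step of the moving planes procedure: verifying that $\Pi_t\cap M=\emptyset$ for $t$ near $\pm 1$ and, more delicately, guaranteeing that the first-failure configuration at $t=t^*$ is genuinely graphical so that the maximum principle applies. Both issues reduce to controlling the behavior of $M$ near its ends, and this is precisely where the hypothesis that $\bdy_\infty M$ consists of at most two points is essential --- it is what ensures that the family $\{\Pi_t\}$ can be chosen to begin disjoint from $M$, and that it contains a unique candidate for the symmetry plane produced by the procedure.
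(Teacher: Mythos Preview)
Your proposal is correct and follows essentially the same route as the paper: Alexandrov moving planes in the Poincar\'e ball model, with the two asymptotic points placed at the poles, sliding totally geodesic planes perpendicular to each horizontal diameter, invoking the maximum principle at the first touching time, and observing that the resulting symmetry plane must be the one through the axis since only that reflection preserves the asymptotic boundary. The paper's writeup is terser on the justification that $t_0=0$ and on the disjointness $\Pi_t\cap M=\emptyset$ near the endpoints, but the argument is the same.
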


In the case that $c=1$, this theorem implies that the surface must be
a genus 0 catenoid cousin.  (This was shown in \cite{UY1}.  The genus 0 catenoid 
cousins were originally described in \cite{B}.)  The condition that the surface 
has asymptotic boundary at most two points implies that $c \geq 1$, 
as shown by do Carmo, Gomes, and Thorbergsson \cite{CGT}.  

We will show that the condition
``embedded'' is critical to the above theorem, by giving an immersed 
counterexample, which we call the genus 1 catenoid cousin. 

\begin{theorem} 
There exists a one-parameter family of CMC 1 
genus 1 complete properly immersed surfaces in $\bfH^3$ with asymptotic 
boundary consisting of two points.
\end{theorem}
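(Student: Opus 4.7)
The plan is to construct the genus 1 catenoid cousins explicitly via the Weierstrass representation for CMC 1 surfaces in $\bfH^3$ mentioned in section 2. First, choose the underlying Riemann surface $\Sigma$ to be a rectangular torus with two punctures placed symmetrically, and specify meromorphic Weierstrass data $(g,\omega)$ on $\Sigma$ built from elliptic functions (for example from the Weierstrass $\wp$-function and its derivative) depending on a small number of real parameters, among them the conformal modulus $\tau$ of the torus. The data should be arranged so that in a degenerate limit it reduces to the Weierstrass data of a catenoid cousin; this will control the end behavior.

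The central difficulty is the \emph{period problem}. Given $(g,\omega)$, Bryant's representation assembles the immersion as $f = F F^{*}$, where $F:\tilde{\Sigma}\to SL(2,\bfC)$ solves an ODE determined by $(g,\omega)$ and $*$ denotes conjugate transpose. For $f$ to descend from $\tilde{\Sigma}$ to $\Sigma$, the monodromy representation $\rho:\pi_{1}(\Sigma)\to SL(2,\bfC)$ of $F$ must take values in $SU(2)$. I would exploit symmetries of the ansatz (arising from involutions of the rectangular torus combined with reflections of $\bfH^{3}$) to force most of these monodromy conditions to hold automatically, reducing the problem to a small system of real equations in the free parameters. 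After this reduction I expect essentially one residual scalar period equation and one tunable parameter, with the modulus $\tau$ free to parametrize a continuous family.

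At this point the computer experiment enters the proof. Numerical experimentation identifies parameter values at which the residual period function appears to change sign as the tunable parameter varies. To make this rigorous, I would evaluate the relevant period integrals using interval arithmetic, so that every numerically computed quantity comes with a guaranteed two-sided error enclosure, and then verify that the enclosures at two explicit nearby parameter values have strictly opposite signs. Continuity of the period map and the intermediate value theorem then force the existence of an exact zero between them, yielding an honest CMC 1 immersion for each value of $\tau$ in a small interval. The main obstacle is precisely this step: the period integrals require integrating a matrix-valued ODE along a nontrivial cycle of the punctured torus, and rounding and discretization errors amplify through the integration, so producing interval enclosures tight enough to certify a sign change is the crucial technical hurdle.

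Finally, completeness, properness, and the two-point asymptotic boundary follow from the local behavior of $(g,\omega)$ at the two punctures. Because the data is modeled locally on that of a catenoid cousin, each puncture is a regular end whose image converges to a single point in $\partial_{\infty}\bfH^{3}$; the symmetry of the ansatz ensures the two ends converge to distinct points, giving the required two-point asymptotic boundary. As $\tau$ varies, these surfaces form the desired one-parameter family.
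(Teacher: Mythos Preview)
Your plan is essentially the paper's own approach: a twice-punctured genus~1 Riemann surface with symmetric Weierstrass data, symmetry arguments to reduce the $SU(2)$ monodromy conditions to a single scalar equation in one tunable parameter, a rigorous numerical verification (the paper uses Runge-Kutta with explicit discretization and round-off bounds rather than interval arithmetic, but to the same end) together with the intermediate value theorem, and continuity in the conformal parameter to obtain the one-parameter family. The only substantive point your sketch elides is that after the symmetry reduction the paper's period condition is not literally ``a function vanishes'' but rather ``$f_1=f_2$ with $|f_1|>2$,'' so the IVT step also requires certifying the side condition $f_1,f_2>2$ along the entire parameter interval (done via a derivative bound), not just a sign change at the endpoints.
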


The genus 1 catenoid cousin 
demonstrates a clear difference between CMC 1 surfaces in $\bfH^3$ 
and minimal surfaces in $\bfR^3$, since Schoen's result on minimal surfaces 
in $\bfR^3$ holds even for immersions.  

The genus 1 catenoid cousin further shows that the set of CMC 1 surfaces 
in $\bfH^3$ with embedded ends is in some sense larger than the set
of minimal surfaces in $\bfR^3$ with embedded ends.  Loosely speaking, 
the set of complete minimal surfaces with embedded ends in $\bfR^3$ can 
be mapped injectively to a set of (one-parameter families of) 
corresponding complete CMC 1 surfaces with embedded ends 
in $\bfH^3$ (\cite{RUY}).  The second theorem above shows that
we cannot map the set of (one-parameter families of) 
complete CMC 1 surfaces with embedded 
ends in $\bfH^3$ injectively to a set of 
corresponding complete minimal surfaces with embedded ends 
in $\bfR^3$, since there does not exist a minimal surface in 
$\bfR^3$ which corresponds to the genus 1 catenoid cousin in $\bfH^{3}$.

In section 4 we give a nonrigorous explanation for why one should 
expect the genus 1 catenoid cousins to exist.  
The remainder of the paper is then devoted to proving Theorem 1.2 
rigorously.  We 
believe that the proof has two interesting characteristics:  

\begin{itemize}
\item One is 
that the period problems that must be solved can be reduced to a 
single period problem, using symmetry properties of the surface, by 
a fairly direct argument.  
This kind of dimension reduction of the period problem can usually be 
done in a geometric and uncomplicated way for minimal surfaces is 
$\bfR^3$, but for CMC 1 surfaces in $\bfH^3$ this kind of dimension 
reduction seems to be inherently more algebraic and less 
geometrically transparent \cite{RUY}.  
\item The other is that we then solve the single remaining period 
numerically, and then we use a mathematically rigorous analysis of 
the numerical method to conclude that the numerical 
results are correct.  Certainly, this kind of 
``numerical error analysis'' 
has been used before, for example in \cite{HHS} and \cite{KPS}, and 
it is likely to be used frequently in the future, as it is well 
suited for solving period problems on surfaces for which no other 
method of solution can be found.  It is easy to imagine how 
this method could be useful in a very wide variety of situations.  
\end{itemize}

We solve the single period problem by applying the intermediate value 
theorem.  The idea is similar to the way the intermediate value 
theorem is used in the conjugate Plateau construction to solve period 
problems for minimal surfaces in $\bfR^3$ \cite{Kar}, \cite{BR}.  
However, the conjugate Plateau construction fails to help 
us in the study of CMC 1 surfaces in $\bfH^3$, hence we have used 
numerical analysis instead.  (The conjugate Plateau construction 
{\em is} of use in studying {\em minimal} surfaces in $\bfH^3$ 
\cite{P}, but does not appear to be useful for studying CMC 1 
surfaces in $\bfH^3$.)  

The same methods we use here could likely also be applied to produce 
similar examples with two ends and genus greater than one, without any 
conceptual additions.  However, with genus greater than 1, 
after reducing the period problems to a minimal set, we would still 
have at least a 2-dimensional problem, and thus the computational 
aspects would become much more involved.  As the genus 1 
example fulfills our goal of finding a counterexample to Schoen's 
result in the hyperbolic case (and is computationally more easily 
understandable), we felt it was appropriate to restrict ourselves to 
genus 1.  

Although this paper is written from a mathematical viewpoint, the 
arguments used here became apparent to the authors only 
by means of a numerical experiment.  Hence, from the authors' point of 
view, experimental results were essential in obtaining 
the above result.  

\begin{figure}
        \hspace{1.51in}
        \epsfxsize=3.2in
        \epsffile{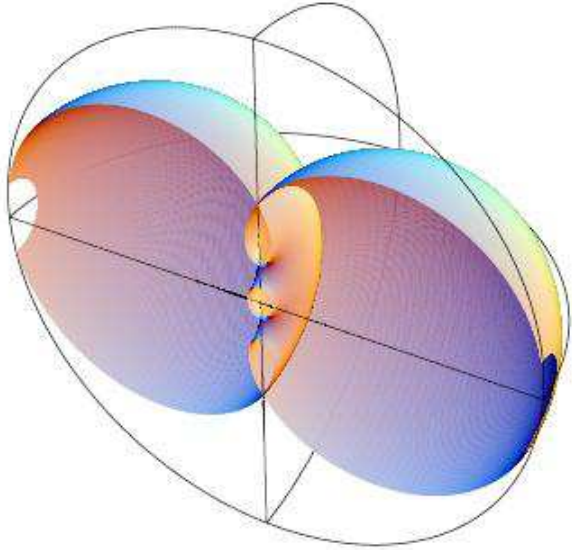}
        \hfill
\caption{A genus 1 catenoid cousin in 
the Poincare model for $\bfH^3$.  (Half of the surface has been cut away.)}
\end{figure}

\section{The Weierstrass representation}

Both minimal surfaces in $\bfR^3$ and CMC 
1 surfaces in 
$\bfH^3$ can
be described parametrically by a pair of meromorphic functions on a 
Riemann surface, via a Weierstrass representation.  First we describe 
the well-known Weierstrass representation for minimal surfaces in $\bfR^3$.
We will incorporate into this representation 
the fact that any complete minimal surface of 
finite total curvature is 
conformally equivalent to a Riemann surface 
$\Sigma$ with a finite number of 
points $\{p_j\}_{j=1}^k \subset 
\Sigma$ removed (\cite{O}):  

\begin{lemma}
Let $\Sigma$ be a Riemann surface.  Let $\{p_j\}_{j=1}^k \subset 
\Sigma$ be a finite number of points, which will represent the ends 
of the minimal surface defined in this lemma.  
Let $z_0$ be a fixed point in $\Sigma \setminus \{p_j\}$.
Let $g$ be a meromorphic function from $\Sigma \setminus \{p_j\}$ to 
the complex plane $\bfC$.  
Let $f$ be a holomorphic function from $\Sigma \setminus \{p_j\}$ 
to $\bfC$.  Assume 
that, for any point in $\Sigma \setminus \{p_j\}$, $f$ has a zero of 
order $2k$ at some point if and only if $g$ has 
a pole of order $k$ at that point, and assume that $f$ has no other 
zeroes on $\Sigma \setminus \{p_j\}$.  Then 
\[ \Phi(z) = \mbox{Re} \int_{z_0}^{z}
\; \left( \begin{array}{c}
        (1-g^2)f d\zeta \\
        i(1+g^2)f d\zeta \\
        2gf d\zeta
        \end{array}
\right) \] is a conformal minimal immersion of the universal cover 
$\widetilde{\Sigma \setminus \{p_j\}}$ of 
$\Sigma \setminus \{p_j\}$ into $\bfR^3$. 
Furthermore, any complete 
minimal surface in $\bfR^3$ can be represented 
in this way.
\end{lemma}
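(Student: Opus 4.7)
The plan is to verify in the forward direction that the given $\Phi$ has the three essential properties (well-definedness, conformality, and minimality), and to use Osserman's theorem together with the classical Weierstrass construction for the converse.

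For the forward direction, I would first observe that the three meromorphic one-forms $(1-g^2)f\, dz$, $i(1+g^2)f\, dz$, and $2gf\, dz$ are in fact \emph{holomorphic} on $\Sigma \setminus \{p_j\}$: at any pole of $g$ of order $k$, the hypothesis that $f$ has a matching zero of order $2k$ exactly kills the poles that $g^2 f$ and $gf$ would otherwise produce. Holomorphicity makes $\Phi$ well-defined on the simply connected universal cover as a real-part path integral. Conformality follows from the algebraic identity $(1-g^2)^2 - (1+g^2)^2 + (2g)^2 \equiv 0$. A short computation shows that the induced metric is a positive multiple of $(1+|g|^2)^2 |f|^2 |dz|^2$, and the zero/pole matching hypothesis shows this is nowhere vanishing (poles of $g$ of order $k$ are cancelled by zeros of $f$ of order $2k$ in exactly the right way), so $\Phi$ is an immersion. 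Finally, each coordinate of $\Phi$ is the real part of a holomorphic function, hence harmonic, and a conformal harmonic immersion into $\bfR^3$ is automatically minimal.

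For the converse, the key input is Osserman's theorem (already cited), which gives that any complete minimal surface of finite total curvature in $\bfR^3$ is conformally a compact Riemann surface $\Sigma$ with finitely many points removed. Given such a minimal immersion $x$, I would let $g$ be the stereographic projection of the oriented Gauss map (which is meromorphic precisely because $x$ is minimal), and then recover $f$ from the holomorphic one-form $\phi_1 - i\phi_2 = 2f\, dz$, where $\phi_k = 2\,\partial x_k/\partial z$. The immersion property of $x$ forces the stated matching of zeros of $f$ with poles of $g$, and direct substitution shows that $\Phi$ recovers $x$ up to a translation. The only nontrivial ingredient in this argument is Osserman's theorem on the conformal type of finite-total-curvature minimal surfaces; everything else reduces to algebraic identities and standard complex-analytic verification, so I anticipate no serious obstacle.
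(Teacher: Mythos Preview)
Your argument is correct and is precisely the standard textbook proof of the Weierstrass representation. The paper itself does not supply any proof of this lemma: it is stated as the ``well-known Weierstrass representation'' with an implicit reference to Osserman \cite{O}, and the paper moves on immediately, so there is no approach in the paper to compare yours against.

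One minor remark on the converse: you invoke Osserman's structure theorem for complete finite-total-curvature minimal surfaces, which is in keeping with the paper's paragraph preceding the lemma. However, the lemma as literally stated asserts the representation for \emph{any} complete minimal surface, with no finite-total-curvature hypothesis and no compactness assumption on $\Sigma$. For an arbitrary complete minimal immersion one may simply take $\Sigma$ to be the underlying Riemann surface with $k=0$ punctures and read off $g$ and $f$ exactly as you describe; Osserman's theorem is then not needed. This is an imprecision in the paper's phrasing rather than a gap in your reasoning.
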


The map $g$ can be geometrically interpreted as the stereographic 
projection of the Gauss map.  
The first and second fundamental forms and the intrinsic Gaussian 
curvature for the surface $\Phi(z)$ are 
    \[ ds^2 = (1+g\bar g)^2\,f \bar{f} dz \overline{dz} \; , \; \; 
        II = -2\mbox{Re}(Q) \; , \; \; 
   K = -4 \left( \frac{|g^\prime|}{|f| (1+|g|^2)^2} \right)^2  \; , \]
where $Q = f g^\prime dz^2$ is the Hopf differential.  

To make a surface of finite total curvature $\int_\Sigma -K dA 
< + \infty$, we must choose $f$ and
$g$ so that $\Phi$ is well defined on $\Sigma 
\setminus \{p_j\}$ itself.  Usually this 
involves ajusting some real parameters in the descriptions of $f$ 
and $g$ and $\Sigma \setminus \{p_j\}$ so that the 
real part of the above integral about any
nontrivial loop in $\Sigma \setminus \{p_j\}$ is zero.

We now describe a Weierstrass type representation for 
CMC $c$ surfaces in $\bfH^3(-c^2)$.  ($\bfH^3(-c^2)$ is a 
simply-connected complete 3-dimensional space with constant sectional 
curvature $-c^2$.  $\bfH^3 := \bfH^3(-1)$.)  
This lemma is a composition of results found in \cite{B}, 
\cite{UY3}, and \cite{UY4}.  

\begin{lemma}
Let $\Sigma$, $\Sigma \setminus \{p_j\}$, 
$z_0$, $f$, and $g$ be the same as in the previous 
lemma.  Choose a null holomorphic immersion $F:
\widetilde{\Sigma \setminus \{p_j\}} \to SL(2,\bfC)$ so 
that $F(z_0)$ is the identity
matrix and so that $F$ satisfies 
\begin{equation}
F^{-1}dF = c \left( \begin{array}{cc} 
g & -g^2 \\ 
1 & -g 
\end{array} \right) f dz \; , 
    \label{eq:wode}
\end{equation}
then 
$\Phi:\widetilde{\Sigma \setminus \{p_j\}} \to H^3(-c^2)$ defined by 
\begin{equation}
        \Phi = \frac{1}{c} F^{-1} \overline{F^{-1}}^t
    \label{eq:imm}
\end{equation}
is a conformal CMC $c$ 
immersion into $\bfH^3(-c^2)$ with the Hermitean model.  
Furthermore, any CMC $c$ surface 
in $\bfH^3(-c^2)$ can be represented in this way.
\end{lemma}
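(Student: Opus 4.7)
The plan is to prove the forward implication by a direct calculation in the Hermitian matrix model of $\bfH^3(-c^2)$, and then invoke the cited references \cite{B}, \cite{UY3}, \cite{UY4} for the converse.

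First I would check that the ODE (\ref{eq:wode}) admits a solution taking values in $SL(2,\bfC)$ on the universal cover $\widetilde{\Sigma\setminus\{p_j\}}$. The hypothesis matching zeros of $f$ of order $2k$ to poles of $g$ of order $k$ is precisely what is needed to make the coefficient
\[ \alpha := c \left( \begin{array}{cc} g & -g^2 \\ 1 & -g \end{array} \right) f \, dz \]
a holomorphic matrix-valued 1-form on all of $\Sigma \setminus \{p_j\}$: at a pole of $g$ the factors $gf$ and $g^2 f$ remain holomorphic. The matrix is traceless, so the unique solution of $dF = F\alpha$ with $F(z_0) = I$ satisfies $\det F \equiv 1$, and $F$ lands in $SL(2,\bfC)$. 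The coefficient is also null, $\det(\alpha/dz) \equiv 0$, which is the content of ``null holomorphic immersion''.

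Second I would verify that $\Phi = \frac{1}{c}\,F^{-1} \overline{F^{-1}}^t$ is a conformal CMC $c$ immersion into $\bfH^3(-c^2)$. In the Hermitian model, $\bfH^3(-c^2)$ is realized as the set of positive definite Hermitian $2 \times 2$ matrices of determinant $c^{-2}$, on which $SL(2,\bfC)$ acts by isometries via $X \mapsto A X \bar{A}^t$. Since $\det F = 1$, $\Phi$ lies in this set. Writing $G = F^{-1}$ so that $dG = -\alpha G$, I would expand $d\Phi = \frac{1}{c}(dG)\bar{G}^t + \frac{1}{c} G\,\overline{dG}^t$ and use the nullity of $\alpha$ to obtain $\langle \Phi_z, \Phi_z \rangle \equiv 0$, which gives conformality. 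A direct computation then yields $ds^2 = (1+|g|^2)^2 |f|^2\, dz\, d\bar z$, while computing the second fundamental form in terms of $\alpha$ and its conjugate produces the Hopf differential $Q = f g' dz^2$ and mean curvature identically equal to $c$. Nondegeneracy of $\Phi$ as an immersion follows from the fact that $(1+|g|^2)|f|$ is nowhere zero, by the pole/zero compatibility of $f$ and $g$.

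Third, for the assertion that every CMC $c$ surface in $\bfH^3(-c^2)$ arises this way, I would cite Bryant's construction in \cite{B} for $c=1$, which extends to general $c$ by the natural scaling $\bfH^3(-c^2) = c^{-1} \bfH^3$, together with the refinements in \cite{UY3} and \cite{UY4} that identify the underlying Riemann surface $\Sigma \setminus \{p_j\}$ and the Weierstrass data $(f,g)$. The main obstacle of the whole lemma is step two: keeping the bookkeeping of the Hermitian model straight, and using the null and traceless properties of $\alpha$ together with the Maurer--Cartan equation $dF = F\alpha$ to extract the precise value $H = c$ (rather than some other constant depending on $f$ or $g$). The converse direction is mathematically the deeper assertion, but it is already fully developed in the cited works, so no independent argument is needed here.
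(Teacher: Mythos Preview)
The paper does not actually prove this lemma; immediately before stating it, the authors write that ``this lemma is a composition of results found in \cite{B}, \cite{UY3}, and \cite{UY4}'' and give no further argument. Your plan to sketch the forward direction by direct computation and to cite those same references for the converse is therefore already more than the paper does, and is a sound approach.

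One technical correction to your sketch: the formula $ds^2 = (1+|g|^2)^2|f|^2\,dz\,d\bar z$ is not what the computation yields in this representation. In the convention of the lemma, with $F^{-1}dF$ (rather than $dF\,F^{-1}$) equal to the null matrix-valued form, the function $g$ is the \emph{hyperbolic} Gauss map, and the induced metric is expressed through the secondary Gauss map $G = dF_{11}/dF_{21}$, namely
\[
ds^2 = (1+|G|^2)^2\,\left|\frac{fg'}{G'}\right|^2\,dz\,d\bar z\,,
\]
as the paper records in the paragraph following the lemma. The formula you wrote is the metric coming from the dual representation (equivalently, the minimal-surface metric obtained in the limit $c\to 0$, where $G\to g$). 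This does not break your overall strategy---nullity of $\alpha$ still gives conformality, and the mean-curvature computation still produces $H\equiv c$---but the intermediate bookkeeping must track which Gauss map appears where.
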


A description of the Hermitean model can be found in
any of \cite{B}, \cite{UY1}, and \cite{UY2}, but we also briefly 
describe it here.  If ${\cal L}^4$ denotes the standard 
Lorentzian 4-space of signature $(-+++)$, then the Minkowsky model for 
$\bfH^3(-c^2)$ is $\bfH^3(-c^2) = 
\{ (t,x_1,x_2,x_3) \in {\cal L}^4 \; : \; \sum_{j=1}^3 x_j^2 - t^2 = 
\frac{-1}{c^2} \}$.  We can identify each point $(t,x_1,x_2,x_3)$ in the 
Minkowsky model with a point 
\[  \left(\begin{array}{cc}
                            t+x_3 & x_1+ix_2 \\
                    x_1-ix_2 & t-x_3
                \end{array}\right) \]
in the space of $2 \times 2$ Hermitean matrices.  Thus the Hermitean model for 
$\bfH^3(-c^2)$ is \[ \bfH^3(-c^2) = 
\{ \pm \frac{1}{c} a \cdot \bar{a}^t \; : \; a \in SL(2,\bfC) \} . \]  

We call $g$ the {\it hyperbolic Gauss map} of $\Phi$.   
As its name suggests, 
the map $g(z)$ has a geometric interpretation for this case as well.  
It is the image
of the composition of two maps.  The first map is from each point $z$ on 
the surface to the point at the sphere at infinity in the Poincare 
model
which is at the opposite 
end of the oriented perpendicular geodesic ray starting at 
$z$ on the surface.  The second map is stereographic projection of the
sphere at infinity to the complex plane $\bfC$ \cite{B}.
The first and second fundamental 
forms and the intrinsic Gaussian curvature 
of the surface are 
    \[
        ds^2 = (1+G\bar G)^2 \, \frac{f g^\prime}{G^\prime} 
         \overline{\left(\frac{f g^\prime}{G^\prime}\right)} dz 
         \overline{dz} \; , \; \; 
        II = -2 \mbox{Re}(Q) + c\,ds^2 \; , \; \; 
   K = -4 \left( \frac{|G^\prime|^2}{|g^\prime| 
|f| (1+|G|^2)^2} \right)^2  \; ,
    \label{eq:second}
    \] where in this case the Hopf differential is $Q = - f g^\prime 
dz^2$ (the sign change in $Q$ is due to the fact that we are 
considering the ``dual'' surface; see \cite{UY4} for an 
explanation of this), and 
where $G$ is defined as the multi-valued meromorphic function 
\[
G=\frac {dF_{11}}{dF_{21}}=\frac {dF_{12}}{dF_{22}} 
\]
on $\Sigma \setminus \{p_j\}$, 
with $F=(F_{ij})_{i,j=1,2}$.  The reason that $G$ is multi-valued is 
that $F$ itself can be multi-valued on $\Sigma \setminus \{p_j\}$ 
(even if $\Phi$ is well defined on $\Sigma \setminus \{p_j\}$ itself).
The function $G$ is called the {\it secondary Gauss map}
of $\Phi$ (\cite{B}).  

In the above lemma, we have 
changed the notation slightly from the notation used in $\cite{B}$, 
because we wish to use the same symbol ``$g$'' both for the map $g$ 
for minimal surfaces in 
$\bfR^3$ and for the hyperbolic Gauss map for CMC $c$ surfaces in 
$\bfH^3(-c^2)$.  And we further wish to give a 
separate notation ''$G$'' for 
the secondary Gauss map used in the hyperbolic case.  We do this to 
emphasize that the ``$g$'' in the Euclidean case 
is more closely related to the hyperbolic Gauss map 
``$g$'' in the $\bfH^3$ case than 
to the geometric Gauss map ``$G$''.  

\begin{figure}
        \hspace{1.51in}
        \epsfxsize=1.8in
        \epsffile{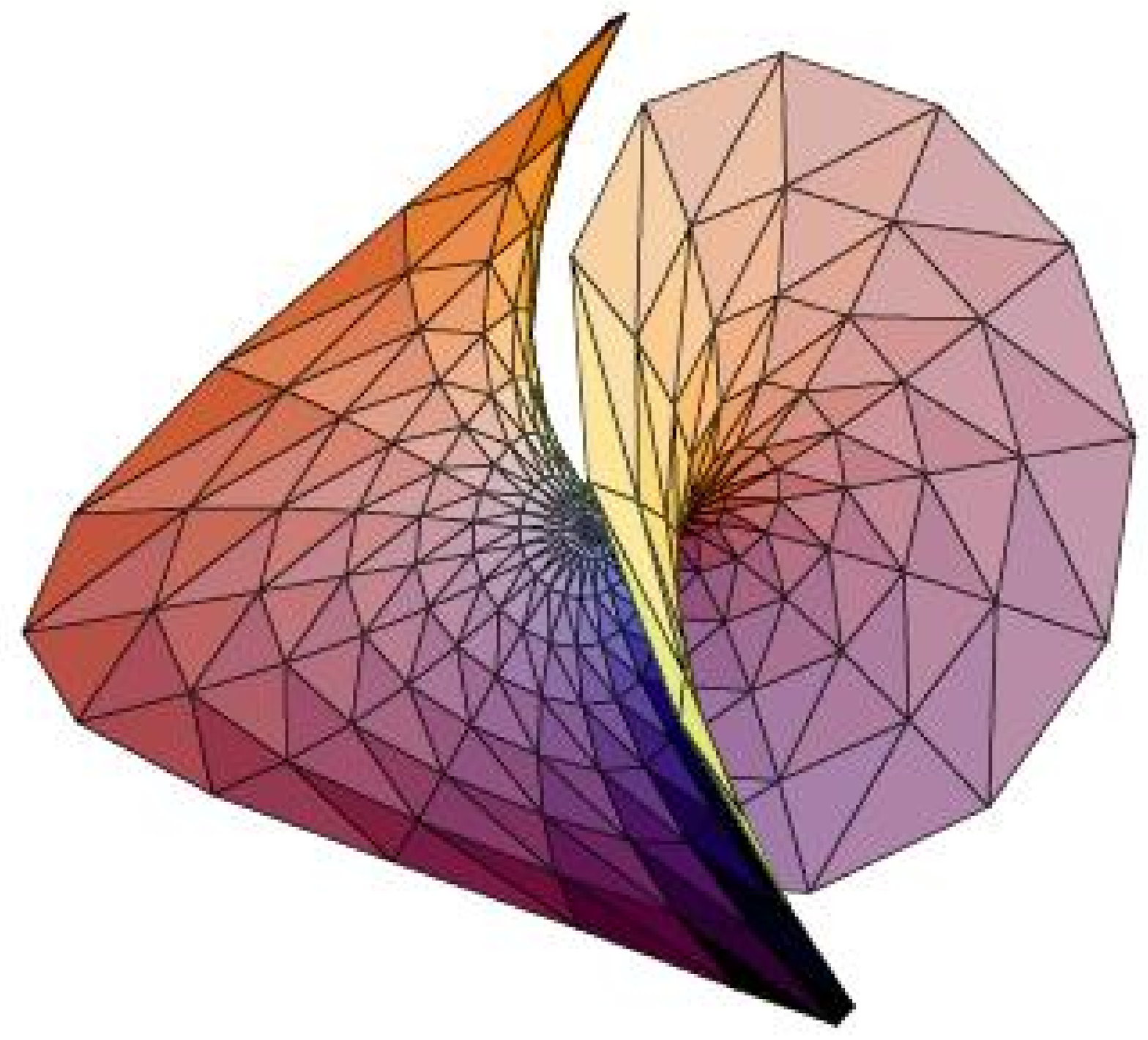}
        \hspace{0.01in}
        \epsfxsize=1.8in
        \epsffile{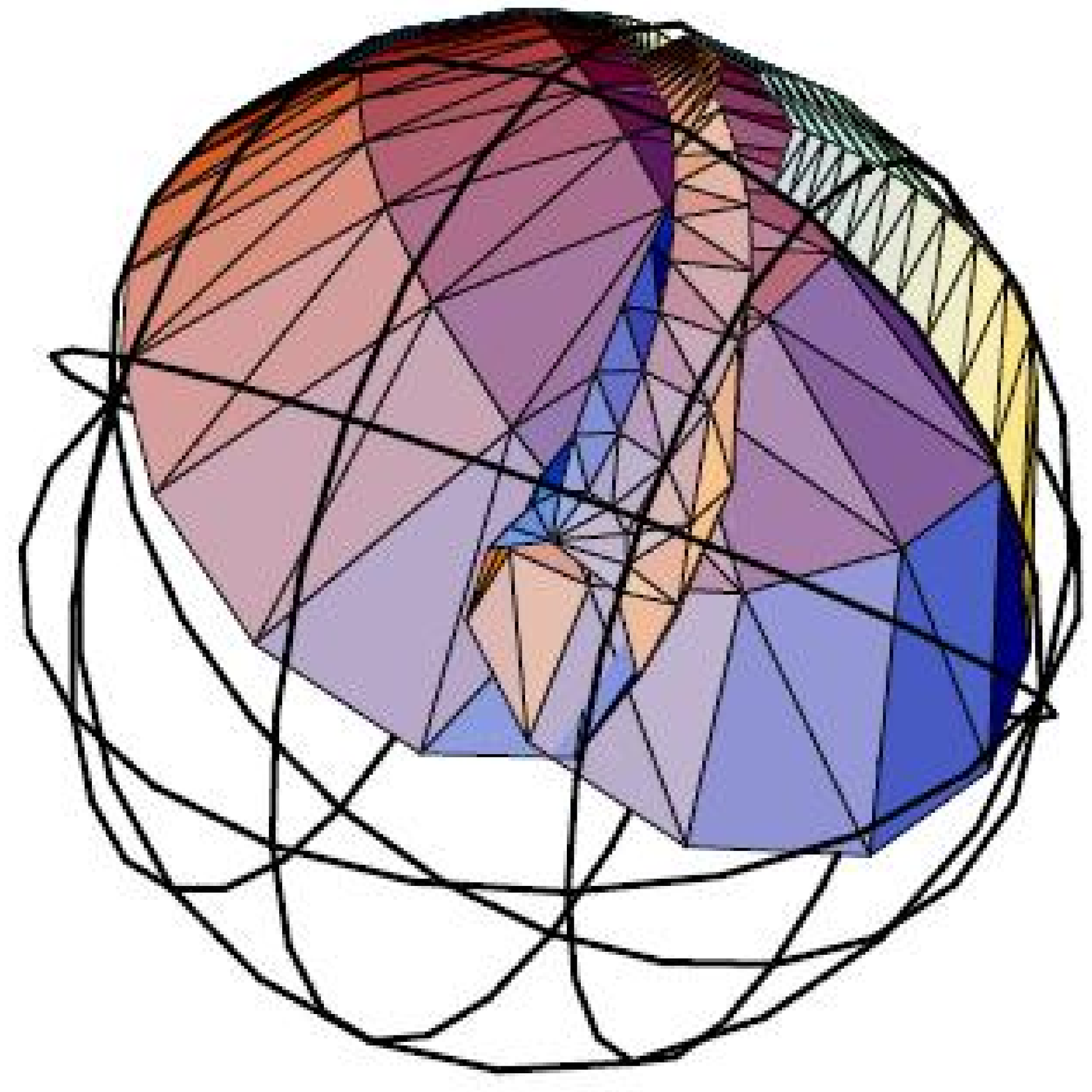}
        \hfill
\caption{A minimal Enneper surface in $\bfR^3$, and half of an Enneper 
cousin in the Poincare model 
for $\bfH^3$.  The entire Enneper cousin consists of the piece 
above and its reflection across the plane containing the planar 
geodesic boundary.}
\end{figure}

\begin{figure}
        \hspace{1.51in}
        \epsfxsize=1.8in
        \epsffile{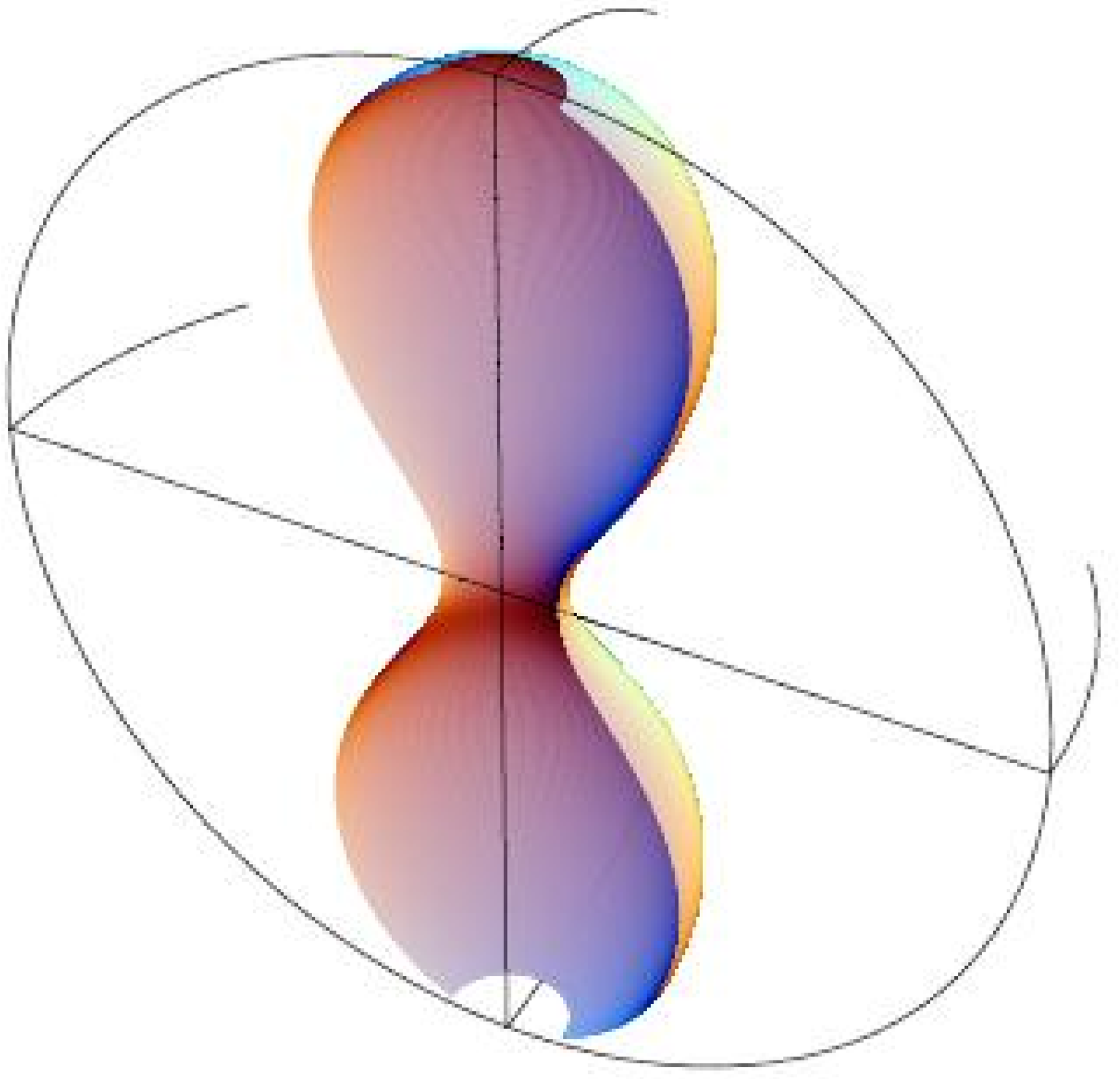}
        \hspace{0.01in}
        \epsfxsize=1.8in
        \epsffile{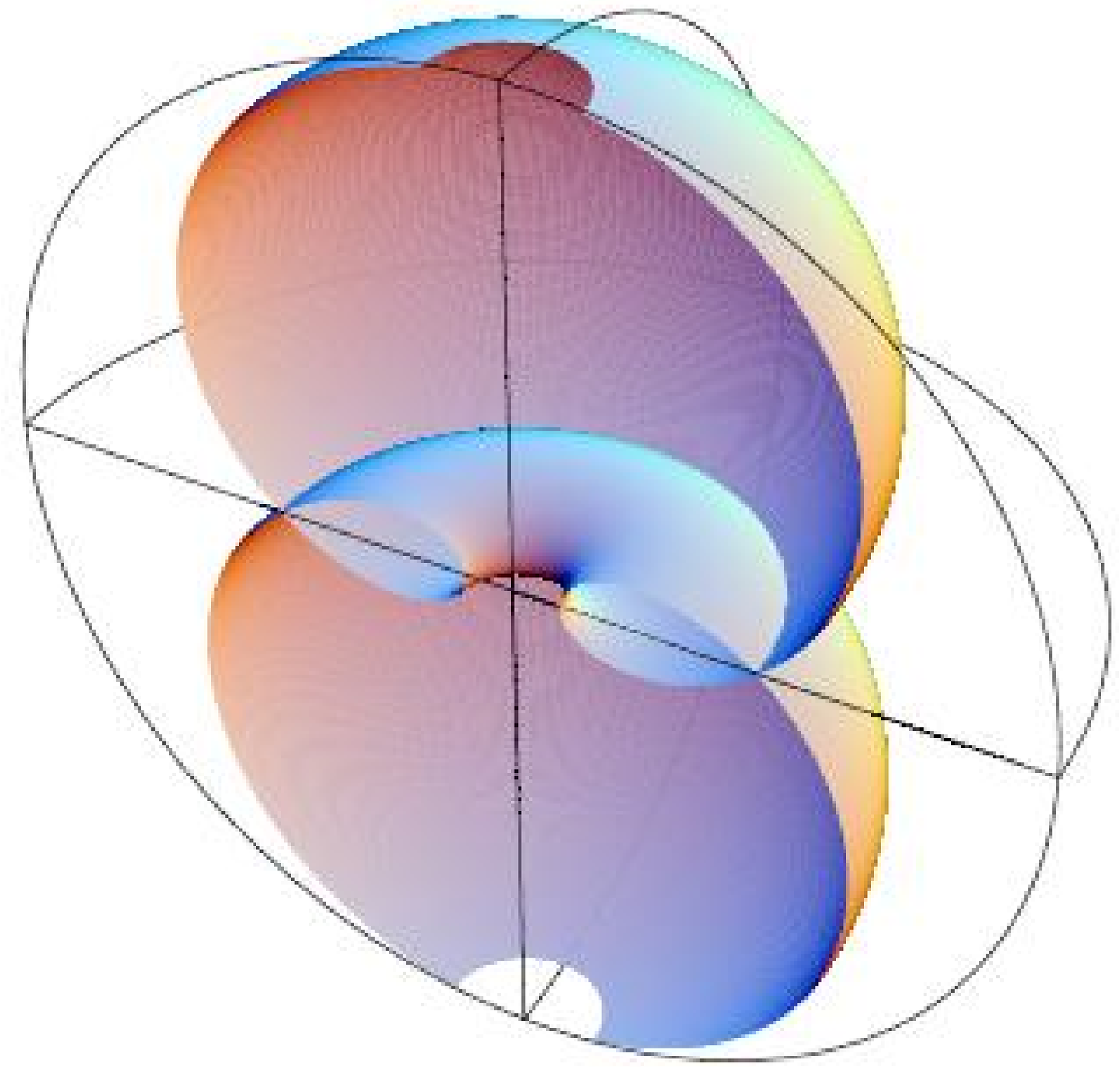}
        \hfill
\caption{Two different genus 0 catenoid cousins in 
the Poincare model for $\bfH^3$.
The surface on the left is embedded, and the surface on the right is 
not embedded.  }
\end{figure}

We now describe some simple examples:
\begin{itemize}
\item The horosphere is a CMC 1 surfaces in $\bfH^3$.  It has 
Weierstrass data $\Sigma \setminus \{p_j \} = \bfC$, 
$g = 1$, $f = 1$.
\item The Enneper cousins (\cite{RUY}) 
(see Figure 2) have Weierstrass data 
$\Sigma \setminus \{p_j \} = \bfC$, $g = z$, $f = \lambda \in \bfR$.
\item The catenoid cousins (\cite{B}, \cite{UY1}) 
(see Figure 3) have Weierstrass data 
$\Sigma \setminus \{p_j \} = \bfC \setminus \{ 0 \}$, $g = z$, 
$f = \frac{\lambda}{z^2} \in \bfR$.  These surfaces are either 
embedded or not embedded, depending on the value of $\lambda$.  
\end{itemize}

We now state some known facts, which when taken together, further 
show just how closely related 
CMC 1 surfaces in $\bfH^3$ are to minimal surfaces 
in $\bfR^3$.

\begin{itemize}
\item It was shown in \cite{UY2} that if $f$ and $g$ and 
$\Sigma \setminus \{p_j\}$ are fixed, 
then as $c \to 0$, the CMC $c$ surfaces $\Phi$ in $\bfH^3(-c^2)$ 
converge locally 
to a minimal surface in $\bfR^3$.  This can be intuited 
from the fact that $G \to g$ as $c \to 0$ (which 
follows directly from equation \ref{eq:wode}), and hence 
the above first and second fundamental forms for the CMC $c$ 
surfaces $\Phi$ converge to the fundamental forms for a 
minimal surface as $c \to 0$ (up to a sign change in $II$ -- a 
change of orientation).  The resulting minimal surface does not 
necessarily have the same global topology as the CMC $c$ surfaces, 
and it may be periodic.  
\item Consider the Poincare model for $\bfH^3(-c^2)$ for $c \approx 0$.  
It is a round ball in $\bfR^3$ centered at the origin 
with Euclidean radius $\frac{1}{|c|}$ endowed with a complete 
radially-symmetric metric 
$ds_c^2 = \frac{4 \sum dx_i^2}{ ( 1 - c^2 \sum x_i^2 )^2 }$ 
of constant sectional curvature $-c^2$.  Contracting 
this model by a factor of $|c|$, we obtain a map to the Poincare model 
for $\bfH^3$.  Under this mapping, CMC $c$ 
surfaces are mapped to
CMC $1$ surfaces.  Thus the problem of existence of 
CMC $c$ surfaces in $\bfH^3(-c^2)$ 
for $c \approx 0$ is equivalent to the problem of existence of 
CMC $1$ surfaces in $\bfH^3$.  
\item It was shown in \cite{RUY} that a finite-total-curvature minimal
surface in $\bfR^3$ satisfying certain conditions (these conditions 
are fairly general and include most known examples) 
can be deformed into a CMC $c$ surface in 
$\bfH^3(-c^2)$ for $c \approx 0$, 
so that $\Sigma$, $f$, and $g$ are the same, up to 
a slight adjustment of the real parameters that are used to solve the 
period problems.  By the previous item, these surfaces are equivalent 
to CMC 1 surfaces in $\bfH^3$, thus we have a 1-parameter family of 
CMC 1 surfaces in $\bfH^3$ with parameter $c$.  
The deformed surfaces might not have finite total 
curvature, but they will be of the same topological type as the minimal 
surface, and they will have the same reflectional symmetries as the 
minimal surface.  (See Section 4.)  
\end{itemize} 

Regarding the last item above, Theorem 1.2 shows that the converse of 
the \cite{RUY} result does not hold.  

\section{Embedded Case}

The proof of Theorem 1.1 uses the maximum principle and
Alexandrov reflection.
Before stating the maximum principle, we define some terms.
If $\Sigma_1$ and $\Sigma_2$ are two smooth oriented complete
hypersurfaces of $\bfH^n$ which are tangent at a point $p$ and have
the same oriented normal at $p$, we say that $p$ is a {\em point of
  common
tangency} for $\Sigma_1$ and $\Sigma_2$.  Let the common
tangent geodesic hyperplane $\cal P$ through $p$
have the same orientation as $\Sigma_1$ and
$\Sigma_2$ at $p$.  Then, near $p$,
expressing $\Sigma_1$ and $\Sigma_2$
as graphs $g_1(x)$ and $g_2(x)$ over points $x
\in {\cal P}$ (the term {\em graph} in this context is defined in
\cite{CL}), we say that $\Sigma_1$ {\em lies above}
$\Sigma_2$ near $p$ if $g_1 \geq g_2$.

\begin{proposition}
(Maximum Principle)
Suppose that $\Sigma_1$ and $\Sigma_2$ are closed oriented 
hypersurfaces in $\bfH^{n}$ with the same 
constant mean curvature $c$ and the same smooth boundary 
$\partial \Sigma_1 = \partial \Sigma_2$.  
Suppose that $\Sigma_1$ and $\Sigma_2$ have a point $p$ of
common tangency, and that $\Sigma_1$ lies above $\Sigma_2$ near $p$.  
(The point $p$ can be either an interior point of 
both $\Sigma_1$ and $\Sigma_2$ or a boundary point of both 
$\Sigma_1$ and $\Sigma_2$.)  Then $\Sigma_1 = \Sigma_2$.
\end{proposition}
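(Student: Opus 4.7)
The plan is to reduce the statement to a local fact about two smooth functions satisfying the same quasilinear elliptic CMC equation over the common tangent geodesic hyperplane $\mathcal{P}$, apply the Hopf strong maximum principle (at an interior common-tangency point) or the Hopf boundary point lemma (at a common boundary point), and then promote the local equality to global equality by an open-closed argument.

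Concretely, I would first use Fermi coordinates along $\mathcal{P}$: the normal exponential map $(x,t) \mapsto \exp_x(t\,\nu(x))$, where $\nu$ is the chosen oriented unit normal to $\mathcal{P}$, identifies a tubular neighborhood of $\mathcal{P}$ in $\bfH^n$ with $\mathcal{P}\times(-\epsilon,\epsilon)$. Since $\Sigma_1$ and $\Sigma_2$ are tangent at $p$ with the same orientation, after shrinking I may write each $\Sigma_i$ as a graph $\{(x,g_i(x)):x\in U\}$ of a smooth function $g_i$ on a neighborhood $U\subset\mathcal{P}$ of $p$, with $g_1\geq g_2$, $g_1(p)=g_2(p)=0$, and $\nabla g_1(p)=\nabla g_2(p)=0$. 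In these coordinates the CMC $c$ condition on the graph of $g$ becomes a quasilinear, uniformly elliptic equation
\[
\mathcal{M}[g] \;:=\; \sum_{j,k} a^{jk}(x,g,\nabla g)\,\partial_{jk}g \;+\; b(x,g,\nabla g) \;=\; c,
\]
with smooth coefficients in a neighborhood of $(0,0)$ in $(g,\nabla g)$ space, and both $g_1$ and $g_2$ are solutions.

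Next I would linearize. Setting $u:=g_1-g_2\geq 0$ and writing
\[
0 \;=\; \mathcal{M}[g_1]-\mathcal{M}[g_2] \;=\; \int_0^1 \frac{d}{ds}\,\mathcal{M}[g_2+s\,u]\,ds,
\]
I obtain a linear, homogeneous, uniformly elliptic equation $Lu=0$ with smooth coefficients on $U$. If $p$ is an interior point of both $\Sigma_i$, then $u\geq 0$ attains an interior minimum at $p$ with $u(p)=0$, and the Hopf strong maximum principle forces $u\equiv 0$ on the connected component of $U$ containing $p$. If $p$ is a common boundary point, then $\partial\Sigma_1=\partial\Sigma_2$ descends in $U$ to a smooth hypersurface $\Gamma$ through $p$ on which $u=0$, with $u\geq 0$ on the adjacent domain; combined with $\nabla u(p)=0$, the Hopf boundary point lemma applied at $p$ again forces $u\equiv 0$ near $p$, since otherwise the lemma would give a strictly positive inward normal derivative at $p$, contradicting $\nabla u(p)=0$.

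Finally, to globalize, let $S\subset\Sigma_1$ consist of those points $q$ at which $\Sigma_1$ and $\Sigma_2$ coincide as oriented submanifold-germs. The local analysis above shows $S$ is open in $\Sigma_1$, and $S$ is closed by continuity of the tangent planes of the $\Sigma_i$. Since $p\in S$ and $\Sigma_1$ is connected, $S=\Sigma_1$, so $\Sigma_1\subseteq\Sigma_2$, and by symmetry $\Sigma_1=\Sigma_2$. The main technical obstacle is the boundary-point case: one must set up the Fermi-coordinate chart so that the common boundary $\partial\Sigma_1=\partial\Sigma_2$ appears as a smooth hypersurface in the parameter domain satisfying the interior sphere condition needed for the Hopf boundary point lemma; once this is arranged, the argument reduces to the standard elliptic machinery.
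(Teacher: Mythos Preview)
Your proposal is correct and follows the standard route: write both hypersurfaces locally as graphs over the common tangent geodesic hyperplane, observe that the CMC condition is a quasilinear uniformly elliptic PDE, linearize the difference, and invoke the Hopf strong maximum principle (interior case) or the Hopf boundary point lemma (boundary case), followed by a connectedness argument to globalize.

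The paper, however, does not give its own proof of this proposition at all: it simply states that the result is well known and refers the reader to \cite{KKMS}, \cite{CL}, and references therein. So there is no in-paper argument to compare against; your outline is precisely the kind of proof one finds in those references (in particular the graph set-up is exactly what \cite{CL} uses, as the paper alludes to when defining ``graph'' just before the proposition).

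Two small remarks on your write-up. First, the open--closed step tacitly assumes $\Sigma_1$ is connected; the proposition as stated does not say this, so you should either add that hypothesis or phrase the conclusion component-by-component. Second, in the closedness part of the open--closed argument you need slightly more than ``continuity of the tangent planes'': at a limit point $q$ of $S$ you know the two surfaces agree to infinite order (since they coincide on an open set accumulating at $q$), and then you conclude local equality near $q$ either by strong unique continuation for the linearized elliptic operator or, more simply, by the real-analyticity of CMC hypersurfaces in a real-analytic ambient space. Either device closes the argument cleanly.
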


The above proposition is well known, and proofs can be found in 
\cite{KKMS}, \cite{CL}, and references therein.

In the Poincare model for $\bfH^3$ ($B^3 = \{x \in \bfR^3, |x| < 1\}$ 
with the metric $ds^2 = \frac{4|dx|^2}{(1 - |x|^2)^2}$), 
the totally geodesic planes are the intersections of $B^3$ with
spheres and planes in $\bfR^3$ which meet $\partial B^3$ orthogonally.
We shall use the Poincare model and these totally geodesic planes 
in the proof of Theorem~1.1, which we now give.  

\begin{proof}
We consider a complete properly embedded CMC surface $M$ in $\bfH^3$.  
First we suppose that $M$ has asymptotic boundary consisting of exactly 
two points.  Applying an isometry of 
$\bfH^{3}$ if necessary, we may assume that these two asymptotic 
points are at the north and south poles $(0,0,\pm 1)$.  

Let $\vec{v}$ be a horizontal unit vector in $\bfR^3$.  For $t \in
(-1,1)$, let $P_t$ be the totally geodesic plane containing the point 
$t \vec{v}$ and perpendicular to the line through $\vec{v}$.  The
plane $P_t$ separates $\bfH^3$ into two regions: let $A_t$ be the
region containing the points $s \vec{v}, s \in (-1,t)$, and let
$B_t$ be the region containing the points $s \vec{v}, s \in (t,1)$.  
Let $(M \cap A_t)^\prime$ be the isometric reflection of $M \cap A_t$
across $P_t$.  

Let $t_{0}$ be the largest value $t_0$ such that for all $t$ less than
$t_0$, Int($(M \cap A_t)^\prime$) and Int($M \cap B_t$) are disjoint.  
When $t$ is close to -1 or 1, $P_t \cap M$ is empty, so it follows 
that such a $t_{0}$ exists and that $t_{0} \in (-1,1)$.  It then 
follows (since $M$ is properly embedded) 
that there exists a finite point of common tangency between $(M \cap 
A_{t_0})^\prime$ and $M \cap B_{t_0}$, and that one surface lies above 
the other in a neighborhood of this point of common tangency.  The
maximum principle implies that $M \cap B_{t_0} = (M \cap
A_{t_0})^\prime$.  (This is the Alexandrov reflection principle.)  
Since $M$ has only two ends at the north and south
poles, it must be that $t_0 = 0$.  Since $\vec{v}$ was 
an arbitrary horizontal vector, it
follows that $M$ is symmetric with respect to any 
geodesic plane through the north and south poles.  Thus it is a
surface of revolution.  

If the surface $M$ has no ends or only one point in its asymptotic 
boundary, one can similarly 
conclude that $M$ is a surface of revolution (spheres and horospheres).  
\end{proof}

\section{Immersed Counterexample}

In sections 5 and 6, 
we give a rigorous proof of existence of the genus 1 catenoid cousin.
However, since the proof itself does not enlighten the reader as to why
it should exist, we give a motivation in this section
for why we should expect this surface to exist.

As noted in section 2, it was shown 
in \cite{RUY} that for any complete finite total curvature
minimal surface in $\bfR^3$ which satisfies certain conditions, 
there exists a corresponding 1-parameter family of CMC 1 surfaces 
in $\bfH^3$.  So, in this sense, the set of CMC 1 surfaces in $\bfH^3$ is
a larger set than the set of minimal surfaces in $\bfR^3$.  We now 
briefly sketch 
the ideas behind the result in \cite{RUY}.  We do not describe the method
in detail, as the reader can refer to \cite{RUY}.  

We start with a given minimal surface in $\bfR^3$, and thus have a 
Riemann surface $\Sigma$ and 
meromorphic functions $f$ and $g$ given to us by the 
Weierstrass representation for minimal surfaces in $\bfR^3$ (Lemma 
2.1).  We can 
use this same $\Sigma$ and $f$ and $g$ in the 
hyperbolic Weierstrass representation (Lemma 
2.2) to produce a CMC $c$ surface in 
$\bfH^3(-c^2)$ for each real number $c$.  

As $c \rightarrow 0$, the Poincare model for $\bfH^3(-c^2)$
(a ball in $\bfR^3$ with Euclidean radius $\frac{1}{|c|}$) converges to 
Euclidean space $\bfR^3$, and these CMC $c$ surfaces in
$\bfH^3(-c^2)$ converge to the given minimal surface (in the sense of 
$C^\infty$ uniform convergence on compact sets).  Thus for $c$ close to zero, 
we can think of compact regions of the CMC $c$ surfaces in
$\bfH^3(-c^2)$ as small deformations of compact regions of 
the given minimal surface in $\bfR^3$.  

If the given minimal surface in $\bfR^3$ is not simply-connected, then
there is a question about whether the deformed CMC $c$ surfaces in 
$\bfH^{3}(-c^{2})$ are well-defined.  This is the period
problem.  (The period problem being solvable essentially means that a certain 
set of equations Per$_j(\lambda_i) = 0$ can be solved with respect to certain 
parameters $\lambda_i$ of the surface.  This will be explained in detail in 
terms of an $SU(2)$ condition in the next section.)  
The minimal surface is assumed to have a "nondegeneracy" 
property, as defined in \cite{RUY}.  
Since the period problem is nondegenerate and solvable on the 
minimal surface, and since the period problem changes 
continuously with respect to $c$, 
the period problem can still be solved 
when $c$ is sufficiently close to 0.  Thus, for $c$ sufficiently close 
to 0, the CMC $c$ surfaces in
$\bfH^3(-c^2)$ are well-defined.

Dilating the Poincare model for $\bfH^3(-c^2)$ by 
a factor of $|c|$ (as described in section 2), we produce 
a one-parameter family of CMC 1 surfaces in $\bfH^{3}$, with 
parameter $c$.  
This is the method used in \cite{RUY} to create well-defined 
non-simply-connected CMC 1 surfaces in $\bfH^3$ 
from non-simply-connected minimal surfaces in $\bfR^3$.

As an example of this, consider the minimal 
genus 1 trinoid in $\bfR^3$.  As discussed in 
\cite{BR}, there is a single real parameter $\lambda$ 
in the Weierstrass data that 
can be adjusted to solve the period problem.  The period problem is 
represented by a map $\lambda \in \bfR \rightarrow \mbox{Per}(\lambda) 
\in \bfR$, and to solve the period problem we must show that there exists 
a value of $\lambda$ so that Per($\lambda$) = 0.  We 
note that the function Per($\lambda$) changes 
continuously in $c$.  Since the period problem for the minimal 
genus 1 trinoid in $\bfR^{3}$ (when $c$ is 0) is solvable and 
nondegenerate, this implies that there exists an interval
$(a,b) \in \bfR$ so that the image of this interval under the map Per 
contains an interval about 0.  By continuity, if we perturb $c$ slightly 
away from 0, it still holds that $0 \in$ Per($(a,b)$).  Thus, for $c$ 
sufficiently close to zero, there exists a CMC $c$ genus 1 trinoid cousin 
in $\bfH^3(-c^2)$.  Then, by dilating the Poincare model, we produce a 
CMC 1 genus 1 trinoid cousin in $\bfH^3(-1)$  (see Figure 4).

\begin{figure}
        \hspace{1.51in}
        \epsfxsize=1.8in
        \epsffile{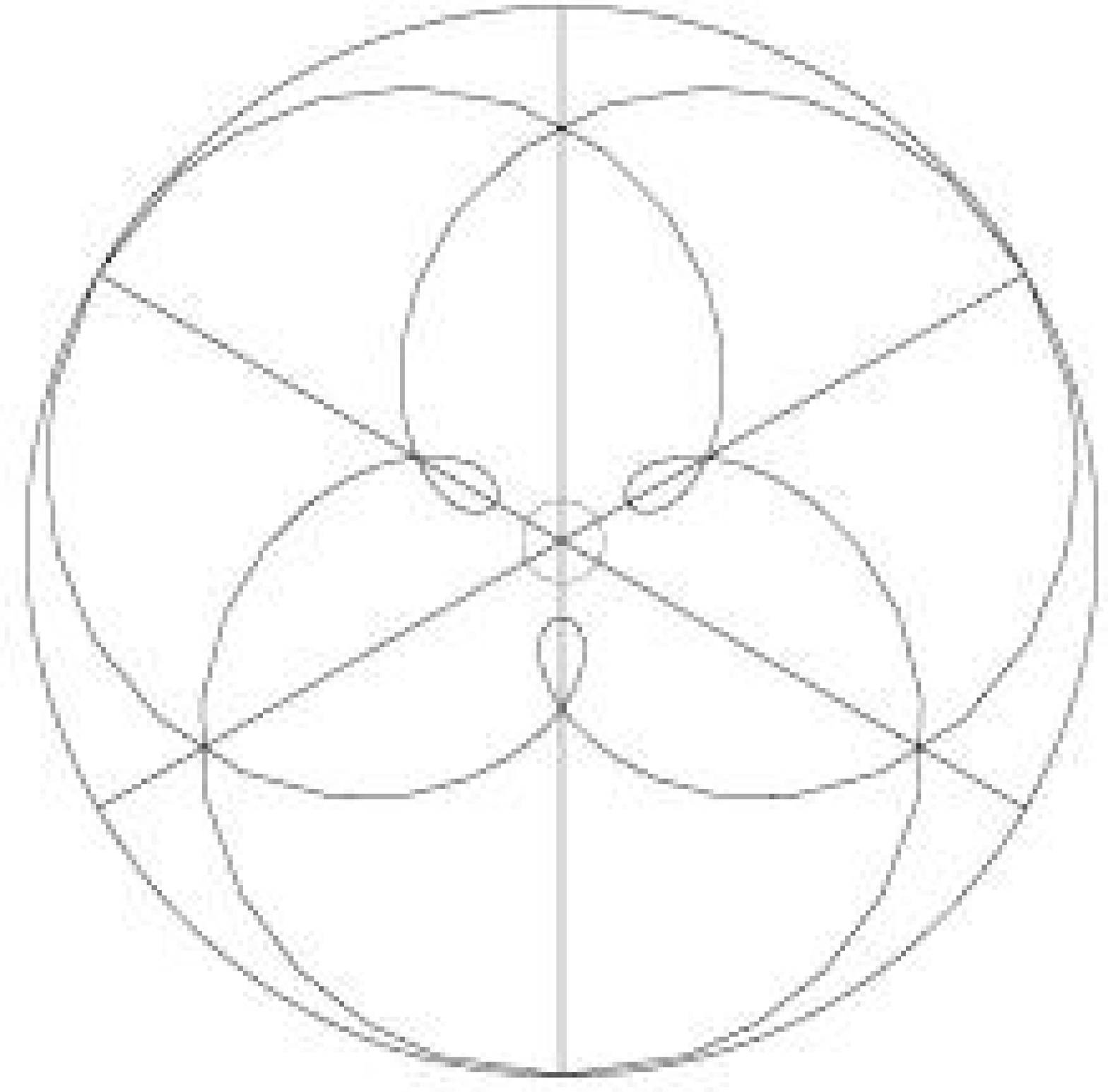}
        \hspace{0.01in}
        \epsfxsize=1.8in
        \epsffile{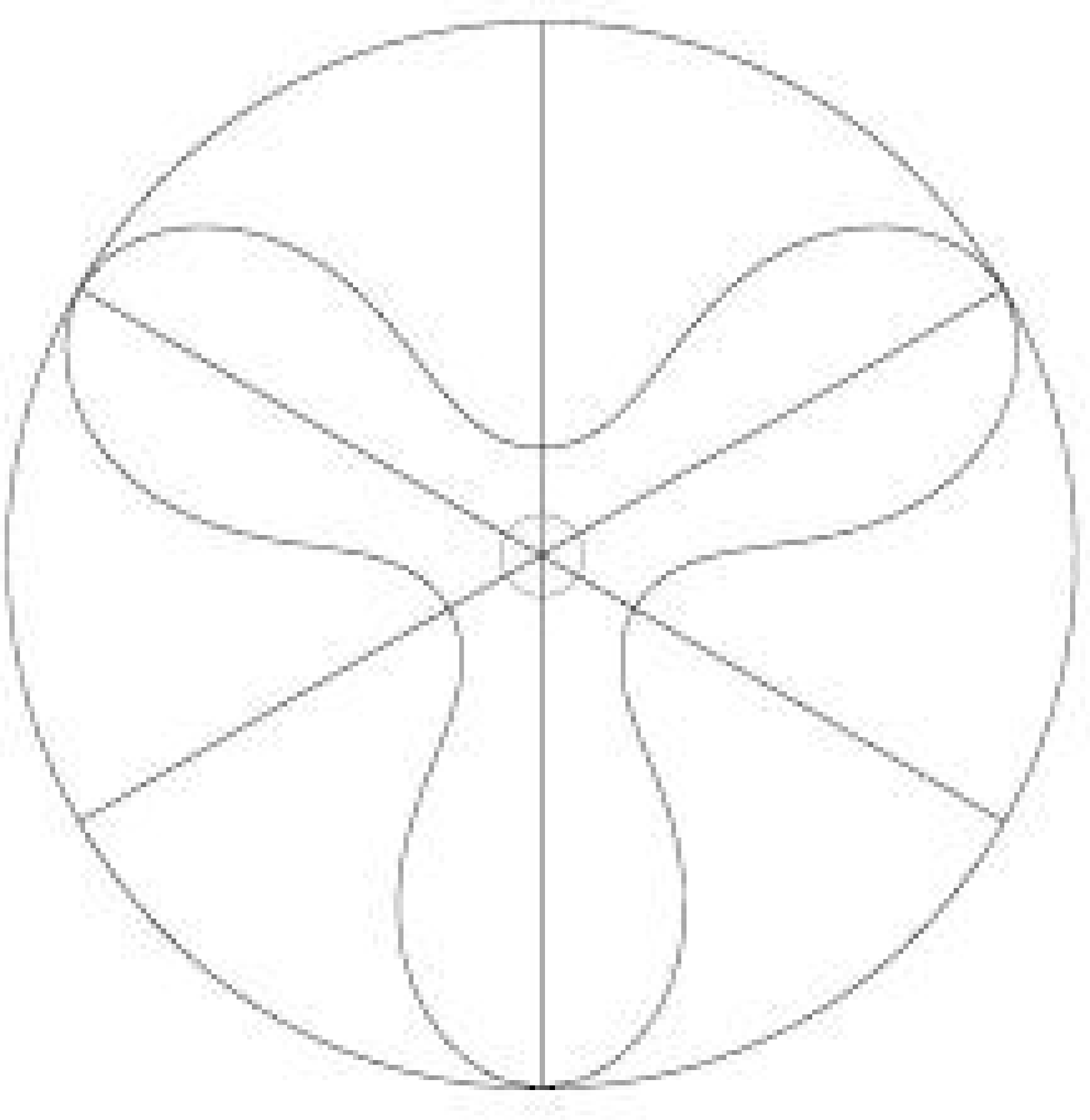}
        \hfill
\caption{These are slices of genus 1 trinoid cousins along a plane of 
reflective symmetry.  The lefthand picture is an embedded genus 1 
trinoid cousin, produced by using a negative value for $c$.
The righthand picture is an immersed genus 1 trinoid cousin, produced by 
using a positive value for $c$.}
\end{figure}

Now let us consider Weierstrass data that would produce a minimal 
genus 1 catenoid is $\bfR^3$.  Again there is a single 
real parameter $\lambda$ in the Weierstrass data that can be adjusted, 
and again the period problem is 
represented by a map $\lambda \in \bfR \rightarrow \mbox{Per}(\lambda) 
\in \bfR$, and to solve the period problem we must again 
show that there exists 
a value of $\lambda$ so that Per($\lambda$) = 0.  
The Weierstrass data is described in 
the next section.  In this case the period problem cannot be 
solved, since the function Per is always positive, but the function 
Per can get 
arbitrarily close to 0.  So we have that an interval $(0,\epsilon)$ is 
contained in the image of Per, for some $\epsilon > 0$.  If we perturb $c$ 
slightly, we expect that 
the image interval $(0,\epsilon)$ gets perturbed continuously.  The authors 
have found by numerical experiment that when $c$ becomes slightly 
negative, the image interval $(0,\epsilon)$ changes to an image interval of 
the form $(\epsilon_1,\epsilon_2)$, where $0 < \epsilon_1 < \epsilon_2$.  
Thus as $c$ becomes negative, the lower endpoint of the 
image interval moves in a positive 
direction.  If this behavior were nondegenerate at $c=0$, then one would 
expect that as $c$ is perturbed slightly in a positive direction, the image 
interval would become of the form 
$(\epsilon_1,\epsilon_2)$, where $\epsilon_1 < 0 < \epsilon_2$.  We have 
found by numerical experiment that this is indeed the case.  Thus for 
slightly positive values of $c$, we can adjust a real parameter in the 
Weierstrass data so that the period function Per becomes zero.  Then, 
after dilating the Poincare model, we have 
existence of a CMC 1 genus 1 catenoid cousin in $\bfH^3(-1)$.

The behavior of the genus 1 catenoid as $c$ is perturbed is similar to the 
behavior of the genus 1 trinoid as $c$ is perturbed.  As $c$ becomes 
negative, the genus 1 trinoid cousin eventually becomes embedded.  
If the period 
problem could be solved for the genus 1 catenoid cousin when $c<0$, the 
resulting surface would be embedded, but as we know by Theorem 1.1, such a 
surface cannot exist.  But when solving the period problem for $c>0$, the 
genus 1 catenoid cousin is not embedded, just in the same way that the 
genus 1 trinoid cousin is not embedded for $c>0$.  
So this numerical experiment is 
consistent with Theorem~1.1, and furthermore shows that Theorem~1.1 
holds only for embedded surfaces.

In the next sections, we will prove Theorem 1.2.  
First we describe the Weierstrass 
representation and the period problem.  Then we give the proof.  
Initially the
period problem is a three dimensional problem.  We 
reduce the period problem by algebraic arguments
to a one dimensional problem, then we show by a numerical calculation
and the intermediate value theorem that this one dimensional 
period problem can be
solved.  The remainder of the 
proof is essentially a mathematically rigorous verification 
that the numerical experiment we conducted is correct.  
We must give mathematically rigorous bounds for both computer 
round-off error and for error introduced by discretizing the problem.
We will show the errors are sufficiently small to ensure existence of a 
solution to the period problem.

\section{Period Problem for the Genus 1 Catenoid Cousin}

Consider the Riemann surface 
\[{\cal M}_{a}^2 = \{(z,w) \in \bfC \times (\bfC \cup \{ \infty \}) : \; 
(z-1)(z+a)w^2 = (z+1)(z-a)\}\]
for $a > 1$.  Thus ${\cal M}_{a}^2$ is a twice punctured torus.  
Let $g = w$ and let $f = \frac{c}{w}$, for $c > 0$.  (This $c$ is 
the same as the $c$ described in the previous section, and $a$ is the 
same as $\lambda$ in the previous section.)  
The Riemannn surface ${\cal M}_{a}^2$ and 
meromorphic functions $g$ and $f$ are the 
Weierstrass data for a genus 1 catenoid.  Let $F(z,w) \in SL(2,\bfC)$
satisfy Bryant's equation 
\[ F^{-1}dF = \left( \begin{array}{cc} 
g & -g^2 \\ 
1 & -g 
\end{array} \right) f dz \]
with initial condition $F(z=0,w=1)$ = identity.  
Hence $\Phi = F^{-1} \overline{F^{-1}}^t$ is a CMC
1 surface in the Hermitean model for $\bfH^3$, and this surface is 
defined on the universal cover of 
${\cal M}_{a}^2$.  (Representing $\Phi$ in this way, we have already 
done the dilation of hyperbolic space which produces a CMC 1 surface 
in $\bfH^3(-1)$ from a CMC $c$ surface in $\bfH^3(-c^2)$.)  

However, we don't yet know that $\Phi$ is
well-defined on ${\cal M}_{a}^2$ itself 
(which must be the case if $\Phi$ has finite total curvature 
$\int_{{\cal M}_{a}^2} -K dA < + \infty$).  
To have this, we need that $F$ 
satisfies the $SU(2)$ condition, which we now describe.  Suppose that 
$\gamma$ is a loop in ${\cal M}_{a}^2$ with base point $p \in {\cal M}_{a}^2$.  
Suppose that the value of $F$ at $p$ is $F(p)$.  Starting with the initial 
condition $F(p)$ and evaluating $F$ along $\gamma$ using Bryant's equation 
above, we return to the base point $p$ with a new value $\breve{F}(p)$ 
for $F$ at $p$.  If the loop $\gamma$ is nontrivial, we can expect that 
$\breve{F}(p) \neq F(p)$.  However, since both $\breve{F}(p)$ and $F(p)$ 
are in $SL(2,\bfC)$, there exists a matrix $P \in SL(2,\bfC)$ such that 
$\breve{F}(p) = P \cdot F(p)$.  If $P \in SU(2)$, then it follows that 
$\breve{F}^{-1} \overline{\breve{F}^{-1}}^t = F^{-1} 
\overline{F^{-1}}^t$.  
Thus if $P \in SU(2)$ for any loop $\gamma$, then 
$\Phi$ is well-defined on ${\cal M}_{a}^2$ itself.  We say that the 
$SU(2)$ condition is satisfied on $\gamma$ if $P \in SU(2)$.

\begin{figure}
        \hspace{1.01in}
        \epsfxsize=3.4in
        \epsffile{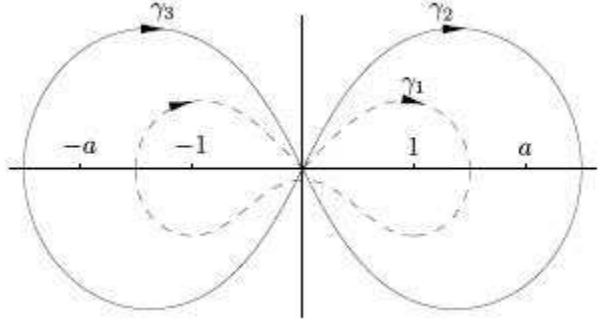}
        \hfill
\caption{The curves $\gamma_1$, $\gamma_2$, and $\gamma_3$ generate 
the fundamental group of ${\cal M}_{a}^2$.}
\end{figure}

It is enough to check the $SU(2)$ condition on the following three
loops, as they generate the fundamental group of ${\cal M}_{a}^2$ (see 
Figure 5):  
\begin{itemize}
\item Let 
$\gamma_1 \subset {\cal M}_{a}^2$ be a curve starting at $(0,1) \in 
{\cal M}_{a}^2$
whose first portion has $z$ coordinate in the first quadrant of the $z$ 
plane and ends at a point $(z,w)$ where $z \in \bfR$ and $1<z<a$, and whose 
second portion starts at $(z,w)$ and ends at $(0,-1)$ and has $z$ coordinate in 
the second quadrant, and whose 
third portion starts at $(0,-1)$ and ends at 
$(-z,\frac{1}{\bar{w}})$ and has $z$ coordinate in 
the third quadrant, and whose fourth and 
last portion starts at $(-z,\frac{1}{\bar{w}})$ and returns to the base 
point $(0,1)$ and has $z$ coordinate in the fourth quadrant.  
\item Let $\gamma_2 \subset {\cal M}_{a}^2$ be a curve starting at $(0,1)$
whose first portion has $z$ coordinate in the first quadrant and ends at a 
point $(z,w)$ where $z \in \bfR$ and $z>a$, and whose 
second and last portion starts at $(z,w)$ and returns to $(0,1)$ and has $z$ 
coordinate in the second quadrant.  
\item Let $\gamma_3 \subset {\cal M}_{a}^2$ be a curve starting at $(0,1)$
whose first portion has $z$ coordinate in the third quadrant and ends at a 
point $(z,w)$ where $z \in \bfR$ and $z<-a$, and whose 
second and last portion starts at $(z,w)$ and returns to $(0,1)$ and has $z$ 
coordinate in the fourth quadrant.  
\end{itemize}

Consider the following symmetries from ${\cal M}_{a}^2$ to 
${\cal M}_{a}^2$: $\phi_1(z,w) =
(\bar{z},\bar{w})$, $\phi_2(z,w) = (-z,\frac{1}{w})$,
$\phi_3(z,w) = (-\bar{z},\frac{1}{\bar{w}})$.  
If $(z(t),w(t)), t \in [0,1]$ is a curve in ${\cal M}_{a}^2$ 
that begins at $(0,1)$ when $t=0$ and ends at some point $(z,w)$ when $t=1$, 
then we can consider how
$F$ changes along $(z(t),w(t))$.  At the beginning point of
$(z(t),w(t))$ let $F(0,1)$ be
the identity, and denote the value of $F$ at the ending point of
$(z(t),w(t))$ by $F(z,w)$.  Then if we consider the curve 
$\phi_i(z(t),w(t))$, $F$ is the identity at the beginning of
this curve 
as well, and we denote the value of $F$ at the end of this curve by
$F(\phi_i(z,w))$.  The following lemma gives the relationships between
$F(z,w)$ and $F(\phi_i(z,w))$.  

\begin{lemma}
\newcounter{num}

If $F(z,w) = \left(\begin{array}{cc} A & B \\
                   C & D
                \end{array}\right)$, 
                then 
$F(\phi_1(z,w)) = \left(\begin{array}{cc} \bar{A} & \bar{B} \\
                   \bar{C} & \bar{D}
                \end{array}\right)$
and 
$F(\phi_2(z,w)) = \left(\begin{array}{cc} D & C \\
                   B & A
                \end{array}\right)$
and 
$F(\phi_3(z,w)) = \left(\begin{array}{cc} \bar{D} & \bar{C} \\
                   \bar{B} & \bar{A}
                \end{array}\right)$.
\end{lemma}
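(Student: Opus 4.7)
The plan is to reduce each of the three identities to the uniqueness theorem for Bryant's linear ODE
\[
F^{-1}dF = \left(\begin{array}{cc} g & -g^2 \\ 1 & -g \end{array}\right) f\,dz, \qquad F(0,1) = I,
\]
applied to well-chosen transformations of $F$. The key observation is that the Weierstrass data $(g,f) = (w,\,c/w)$ (with $c>0$ real) possesses symmetries matched to the three involutions: under $\phi_1$ the data transforms as $g \mapsto \bar g$, $f \mapsto \bar f$, $dz \mapsto d\bar z$; under $\phi_2$ it transforms as $g \mapsto 1/g$, $f \mapsto fg^2$, $dz \mapsto -dz$; and $\phi_3 = \phi_1 \circ \phi_2$. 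Since each $\phi_i$ fixes the basepoint $(0,1)$, modified solutions starting at the identity may be directly compared with pulled-back ones.

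For $\phi_1$, I would fix any path $\gamma(t)$ from $(0,1)$ to $(z,w)$ and set $H(t) = \overline{F(\gamma(t))}$ (entrywise). Taking complex conjugates of Bryant's equation along $\gamma$, and using that $c$ is real, produces exactly the ODE satisfied by $F(\phi_1 \circ \gamma(t))$; since $H(0) = I$, uniqueness of solutions in $SL(2,\bfC)$ gives $F(\phi_1(z,w)) = \overline{F(z,w)}$, which is the first formula.

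For $\phi_2$, let $J = \left(\begin{array}{cc} 0 & 1 \\ 1 & 0 \end{array}\right)$ and set $H(t) = J F(\gamma(t)) J$. The matrix identity
\[
J \left(\begin{array}{cc} g & -g^2 \\ 1 & -g \end{array}\right) J = \left(\begin{array}{cc} -g & 1 \\ -g^2 & g \end{array}\right),
\]
together with the substitutions $g \mapsto 1/w$, $f \mapsto cw$, $dz \mapsto -dz$, shows that $H^{-1}dH$ coincides with the connection form pulled back along $\phi_2 \circ \gamma$. Since $H(0) = JIJ = I$, uniqueness yields $F(\phi_2(z,w)) = J F(z,w) J$, which exchanges $(A,B,C,D)$ into $(D,C,B,A)$ as stated. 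Finally, applying the $\phi_1$ identity at the point $\phi_2(z,w)$ gives $F(\phi_3(z,w)) = \overline{F(\phi_2(z,w))} = J\,\overline{F(z,w)}\,J$, producing the third formula.

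The only real obstacle is the algebraic bookkeeping in the $\phi_2$ step: verifying that the sign flip in $dz$ exactly cancels the sign introduced by conjugating the Bryant matrix by $J$, while the rescaling $g \mapsto 1/g$, $f \mapsto fg^2$ preserves the product $g \cdot f\,dz$ and its companion entries. Because each $\phi_i$ fixes the basepoint, the fact that $F$ is defined on the universal cover of ${\cal M}_a^2$ causes no trouble: the image of a lifted path under $\phi_i$ is again a path starting at the basepoint, and uniqueness for the linear ODE along that path delivers the claim directly.
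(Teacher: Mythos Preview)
Your proposal is correct and follows essentially the same route as the paper: both arguments verify that the conjugated or $J$-conjugated solution (with $J=\left(\begin{smallmatrix}0&1\\1&0\end{smallmatrix}\right)$) satisfies Bryant's equation along the transformed path with the same initial condition, and then invoke uniqueness, handling $\phi_3$ by composition. The paper writes the $\phi_2$ step by explicitly rearranging the entries of the ODE rather than naming the conjugating matrix $J$, but the computation is identical.
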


\begin{proof}
Suppose $F(0,1)$ is 
the identity and $F = \left(\begin{array}{cc} A & B \\
                   C & D
                \end{array}\right)$ is a solution to the equation 
\[  \left(\begin{array}{cc}
                            dA & dB \\
                    dC & dD
                \end{array}\right) = \left(\begin{array}{cc}
                            A & B \\
                    C & D
                \end{array}\right)\left(\begin{array}{cc}
                            1 & -g \\
                    \frac{1}{g} & -1
                \end{array}\right) cdz \;  \]
on $(z(t),w(t))$.  Equivalently, 
\[  \left(\begin{array}{cc}
                                    d\bar{A} & 
                                    d\bar{B} \\
                    d\bar{C} & d\bar{D}
                \end{array}\right) = \left(\begin{array}{cc}
                            \bar{A} & \bar{B} \\
                    \bar{C} & \bar{D}
                \end{array}\right)\left(\begin{array}{cc}
                            1 & -\bar{g} \\
                    \frac{1}{\bar{g}} & -1
                \end{array}\right) cd\bar{z} \;  \] on $(z(t),w(t))$.  
Since when $(z,w) \rightarrow \phi_1(z,w)$, we have that 
$z \rightarrow \bar{z}$ and $g \rightarrow \bar{g}$, we conclude 
that $\left(\begin{array}{cc}
                            \bar{A} & \bar{B} \\
                    \bar{C} & \bar{D}
                \end{array}\right)$ is a solution on 
$\phi_1(z(t),w(t))$.  Since the initial condition $F=$identity is left 
unchanged by conjugation, we conclude the first part of the lemma.
The above equation could also be equivalently written as 
\[  \left(\begin{array}{cc}
                            dD & dC \\
                    dB & dA
                \end{array}\right) = \left(\begin{array}{cc}
                            D & C \\
                    B & A
                \end{array}\right)\left(\begin{array}{cc}
                            1 & -\frac{1}{g} \\
                   g & -1
                \end{array}\right) cd(-z) \; . \]
Since when $(z,w) \rightarrow \phi_2(z,w)$, 
we have that 
$z \rightarrow -z$ and $g \rightarrow \frac{1}{g}$,
                we can conclude the 
                second part of the lemma in the same way.
Since $\phi_3=\phi_2 \circ \phi_1$, the first two parts of the lemma imply 
the final part.
\end{proof}

In the next lemma, we consider the map 
$\phi_4(z,w) = (\bar{z},-\bar{w})$.  This map is different 
from the other three maps $\phi_i(z,w)$, $i=1,2,3$ 
in that $(0,1)$ is not in the fixed point set of $\phi_4$.  
Thus when $(z(t),w(t))$ is a curve that
begins at $(0,1)$, $\phi_4(z(t),w(t))$ is a curve that
begins at $(0,-1)$, not $(0,1)$.  

\begin{lemma}
Suppose that $(z(t),w(t)) \subset {\cal M}^2_a$ is a curve that starts at
  $(0,1)$ and ends at a point $(z,w)$ 
  such that $z \in \bfR$ and $1<z<a$.  Evaluating Bryant's equation 
  along $(z(t),w(t))$ with initial condition $F(0,1)=$identity, 
  we denote the value of $F$ at the endpoint $(z,w)$ by 
  $F(z,w)=\left(\begin{array}{cc} A & B \\
                   C & D
                \end{array}\right)$.  Then 
  $\phi_4(z(t),w(t))$ starts at $(0,-1)$ and ends at the same endpoint 
  $(z,w)$.  If we evaluate Bryant's equation along $\phi_4(z(t),w(t))$ 
  with initial condition $F(0,-1)=$identity, then the value of $F$ at 
  the endpoint $(z,w)$ of $\phi_4(z(t),w(t))$ is 
  $F(\phi_4(z,w))=\left(\begin{array}{cc} \bar{A} & -\bar{B} \\
                   -\bar{C} & \bar{D}
                \end{array}\right)$.
\end{lemma}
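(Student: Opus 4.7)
My plan is to model the argument on Lemma 5.1: exhibit an explicit solution to Bryant's equation along $\phi_4(z(t),w(t))$ built from $F(z,w)$ by complex conjugation together with conjugation by a fixed constant matrix, then conclude by uniqueness of solutions to the underlying linear ODE.

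With $g=w$ and $f=c/w$, I would write Bryant's equation schematically as $dF=c\,F\,N(w)\,dz$, where $N(w)$ denotes the $2\times 2$ matrix with rows $(1,-w)$ and $(1/w,-1)$. Taking the complex conjugate yields $d\bar F=c\,\bar F\,N(\bar w)\,d\bar z$. Along the image curve $\phi_4(z(t),w(t))$ the coordinates transform as $z\mapsto\bar z$, $w\mapsto-\bar w$, and $dz\mapsto d\bar z$, so Bryant's equation there reads $d(F\circ\phi_4)=c\,(F\circ\phi_4)\,N(-\bar w)\,d\bar z$. The crucial algebraic observation is that $N(-\bar w)=\sigma\,N(\bar w)\,\sigma^{-1}$ for $\sigma={\rm diag}(1,-1)$, which is a one-line entry-by-entry check. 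Consequently the candidate $\tilde F:=\sigma\,\bar F\,\sigma^{-1}$ satisfies exactly the same ODE as $F\circ\phi_4$ along $\phi_4(z(t),w(t))$.

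Next I would check the initial conditions. Since $\phi_4(0,1)=(0,-1)$ and $F(0,1)=I$, we have $\tilde F(0,-1)=\sigma\,I\,\sigma^{-1}=I$, matching the stipulated initial condition $F(0,-1)=I$ at the start of $\phi_4(z(t),w(t))$. Uniqueness for the linear first-order ODE then gives $F(\phi_4(z,w))=\sigma\,\overline{F(z,w)}\,\sigma^{-1}$; unpacking this product yields a matrix whose diagonal entries are $\bar A,\bar D$ and whose off-diagonal entries are $-\bar B,-\bar C$, as claimed.

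Finally, for the statement even to make sense I would record the consistency check that $\phi_4$ truly sends the endpoint to itself: when $z\in(1,a)$ is real, the defining relation $(z-1)(z+a)w^2=(z+1)(z-a)$ forces $w^2<0$, so $w$ is purely imaginary and $-\bar w=w$. I expect the main delicate point in writing this out to be sign bookkeeping. Unlike $\phi_1,\phi_2,\phi_3$, whose common base point $(0,1)$ is fixed, $\phi_4$ swaps $(0,1)$ and $(0,-1)$, so a nontrivial constant conjugating matrix must be introduced rather than coming for free. The choice $\sigma={\rm diag}(1,-1)$ is natural because it precisely implements the sign flip $g\mapsto-g$ in the coefficient matrix of Bryant's equation while preserving the identity initial data.
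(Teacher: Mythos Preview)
Your proposal is correct and follows essentially the same approach as the paper: the paper rewrites Bryant's equation in the form
\[
\left(\begin{array}{cc} d\bar A & -d\bar B \\ -d\bar C & d\bar D \end{array}\right)
=
\left(\begin{array}{cc} \bar A & -\bar B \\ -\bar C & \bar D \end{array}\right)
\left(\begin{array}{cc} 1 & \bar g \\ -1/\bar g & -1 \end{array}\right) c\,d\bar z,
\]
which is precisely your identity $d(\sigma\bar F\sigma^{-1})=c\,(\sigma\bar F\sigma^{-1})\,N(-\bar w)\,d\bar z$ with $\sigma=\mathrm{diag}(1,-1)$ written out entrywise, and then invokes the same uniqueness argument as in Lemma~5.1. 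Your explicit verification that the endpoint is fixed by $\phi_4$ (since $w$ is purely imaginary for real $z\in(1,a)$) is a useful addition that the paper leaves implicit.
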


\begin{proof}
Bryant's equation can be equivalently written as 
\[  \left(\begin{array}{cc}
                                    d\bar{A} & 
                                    -d\bar{B} \\
                    -d\bar{C} & d\bar{D}
                \end{array}\right) = \left(\begin{array}{cc}
                            \bar{A} & -\bar{B} \\
                    -\bar{C} & \bar{D}
                \end{array}\right)\left(\begin{array}{cc}
                            1 & \bar{g} \\
                    -\frac{1}{\bar{g}} & -1
                \end{array}\right) cd\bar{z} \; . \] 
The result follows just as in the previous proof.  
\end{proof}

Let $\alpha_1(t), t \in [0,1]$ be a curve starting at 
$(z,w) = (0,1)$ whose projection to the $z$-plane is an embedded curve
in the first quadrant, and whose endpoint has a $z$ coordinate that is real 
and larger than 1 and less than $a$.
Let $\alpha_2(t), t \in [0,1]$ be a curve starting at 
$(z,w) = (0,1)$ whose projection to the $z$-plane is an embedded curve
in the first quadrant, and whose endpoint has a $z$ coordinate that is real 
and larger than $a$.  With $F$=identity at $(z,w) = (0,1)$, we solve 
Bryant's equation along these two paths to find that 
\[ F(\alpha_1(1)) = \left(\begin{array}{cc} A_1 & B_1 \\
                   C_1 & D_1 
                \end{array}\right) \; , \;
F(\alpha_2(1)) = \left(\begin{array}{cc} A_2 & B_2 \\
                   C_2 & D_2
                \end{array}\right) \; . \]
Then traveling about the loop $\gamma_1$, it follows from Lemmas 5.1 and 
5.2 that $F$ changes from the
identity to the matrix
\[ \phi := \left(\begin{array}{cc} A_1 & B_1 \\
                   C_1  & D_1
                \end{array}\right)\left(\begin{array}{cc}  \bar{D}_1 & 
                \bar{B}_1 \\
                   \bar{C}_1 & \bar{A}_1
                \end{array}\right)\left(\begin{array}{cc} D_1 & -C_1 \\
                   -B_1  & A_1
                \end{array}\right)\left(\begin{array}{cc} \bar{A}_1 & -\bar{C}_1 \\
                   -\bar{B}_1 & \bar{D}_1
                \end{array}\right) \; . \]
And traveling about the loop $\gamma_2$, it follows from Lemma 5.1 that 
$F$ changes from the identity to the matrix
\[ \psi := \left(\begin{array}{cc} A_2 & B_2 \\
                    C_2 & D_2
                \end{array}\right)
\left(\begin{array}{cc} \bar{D}_2 & -\bar{B}_2 \\
                    -\bar{C}_2 & \bar{A}_2
                \end{array}\right) \; . \]
And traveling about $\gamma_3$, $F$ changes from the identity to the matrix
\[ \left(\begin{array}{cc} D_2 & C_2 \\
                    B_2 & A_2
                \end{array}\right)
\left(\begin{array}{cc} \bar{A}_2 & -\bar{C}_2 \\
                    -\bar{B}_2 & \bar{D}_2
                \end{array}\right) \; . \]
Changing the initial condition from $F(0,1) = $ identity to 
\[ F(0,1) = \left(\begin{array}{cc} \alpha & \beta \\
                   \beta & \alpha 
                \end{array}\right) \; , \]
where $\alpha, \beta \in \bfR$, $\alpha^2 - \beta^2 = 1$, we see that
solving the $SU(2)$ conditions on all three loops $\gamma_1$, $\gamma_2$, 
and $\gamma_3$ is equivalent to showing that 
\[ \left(\begin{array}{cc} \alpha & \beta \\
                    \beta & \alpha
                \end{array}\right)
\phi
\left(\begin{array}{cc} \alpha & -\beta \\
                    -\beta & \alpha
                \end{array}\right) \; \; , \; \;
                 \left(\begin{array}{cc} \alpha & \beta \\
                    \beta & \alpha
                \end{array}\right)
\psi
\left(\begin{array}{cc} \alpha & -\beta \\
                    -\beta & \alpha
                \end{array}\right) \]
are both in SU(2).  
We can choose an $\alpha$ and $\beta$ so that this holds precisely
when 
\[  f_1 := \frac{-2(\bar{A_1}D_1 + \bar{D}_1A_1 + 
\bar{C}_1B_1 + \bar{B}_1C_1)}{\bar{D}_1C_1 + \bar{C}_1D_1 + 
\bar{B}_1A_1 + \bar{A}_1B_1} = 
\frac{2(\bar{A}_2D_2 - \bar{D}_2A_2 + 
\bar{C}_2B_2 - \bar{B}_2C_2)}{\bar{D}_2C_2 - \bar{C}_2D_2 + 
\bar{B}_2A_2 - \bar{A}_2B_2}
 =: f_2\] and the absolute value of this 
number is greater than 2.  If this holds, we choose $\alpha$ and $\beta$ 
so that 
\[  f_1 = \frac{1+2\beta^2}{\beta\sqrt{1+\beta^2}} = f_2 \; \; ,\]
and then the $SU(2)$ conditions are satisfied.  

In order to prove Theorem 1.2, we need to show there exist values 
$c$ and $a$ so that $c>0$, $a>1$, $|f_1|=|f_2|>2$, and $f_1=f_2$.  In this 
next section we check that such values for $c$ and $a$ exist, by doing a 
mathematically rigorous analysis of the error bounds for our numerical 
approximations.  (See Figure~6.)


\begin{figure}
        \hspace{1.01in}
        \epsfxsize=3.4in
        \epsffile{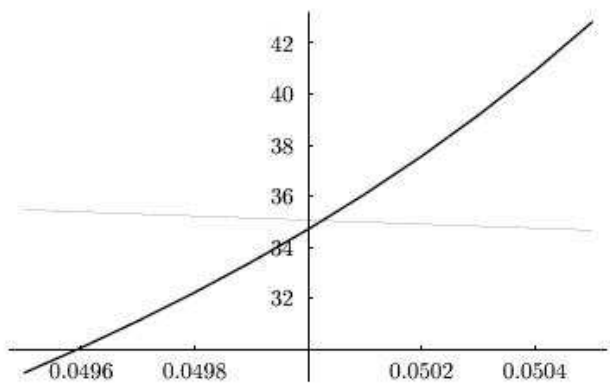}
        \hfill
\caption{The functions $f_1$ (the gray curve) and 
$f_2$ (the black curve) when $a=1.78$.  
The  horizontal axis represents $c$, and the 
vertical axis represents $f_1$ and $f_2$.  We can see that $f_1, 
f_2 > 2$ on the interval $c \in (0.0495,0.0505)$, and $f_1 = f_2$ at 
some value of $c$, and $a > 1$.}
\end{figure}

\section{Error Estimates}

Here we shall prove that for some given value of $a>1$ there exists a 
positive value for $c$ so that $f_1 = f_2 > 2$.  We 
do this by showing that for one particular value for $a$, there exists a
value of $c>0$, call it $c_1$, so that $f_1 > f_2$ at $c_1$, and there 
exists another value of $c>c_1$, call it $c_2$, so that $f_1
< f_2$ at $c_2$.  
We also show that $+\infty>f_1,f_2>2$ for all values of $c \in 
(c_1,c_2)$.  
Then, by the continuity of $f_i$ and the intermediate value theorem, we
conclude that there exists a $c \in [c_1,c_2]$ such that $f_1 = f_2 > 2$.  

Furthermore,
by continuity, for any other value of $a$ sufficiently close to our chosen
value of $a$, there also exists a positive value for $c$ so that $f_1 = 
f_2>2$,
and hence the genus 1 catenoid exists for all $a$ sufficiently close to our 
chosen value of $a$.  Thus, with our method, showing
existence for one value of $a$ is sufficient to conclude the existence
of a one-parameter family of genus 1 catenoid cousins.  (However, we
cannot draw any conclusions about the possible 
range of the parameter $a$ for this
one-parameter family.)  

Thus, to prove Theorem 1.2, it is sufficient to do the 
following:
\begin{itemize}
\item We choose a suitable value for $a$ and call it 
$a_0$.  Then we choose suitable values for $c_1>0$ and $c_2>c_1$.
\item Using the initial condition $F$=identity, and the value $a_0$ for 
$a$, and using the value $c_i$ for $c$, we evaluate the Runge-Kutta algorithm
approximation for the solution to Bryant's equation along the path 
$\alpha_j(t)$.  With each evaluation of the Runge-Kutta alorithm, we make the 
evaluation by both rounding each mathematical operation upward and rounding 
each mathematical operation downward.  Thus for each output of the 
algorithm, we can find a range in which the theoretical value of the 
output of the algorithm must lie.
\item We then use Lemma 6.1, which gives an upper bound on the absolute 
value of the difference between the theoretical value of the 
output of the algorithm and the actual value of the solution of Bryant's 
equation.  Using Lemma 6.1, we can find a single bound which is valid 
for $a=a_0$ and all $c \in [c_1,c_2]$.
\item  We then have enough information to determine that any possible 
approximation errors are small enough to ensure that 
at $c_1$, $f_1 > f_2$ and that at $c_2$, $f_1
< f_2$.  
\item Then, it only remains to show that $f_1$ and $f_2$ are both 
bounded and greater than 2 for all $c \in [c_1,c_2]$.  We do this by 
showing that the derivative with respect to $c$ of the theoretical value 
of the output of the algorithm is bounded by a certain constant, 
for all $c \in [c_1,c_2]$.  This is the purpose of 
Lemma 6.2 -- it allows
us to place limits on the rate at which the output of the algorithm can 
change with respect to $c$.  This enables us to conclude that $2< f_i < 
\infty$ for all $c \in [c_1,c_2]$ simply by checking that this is so 
at a finite number of values of $c$ in $[c_1,c_2]$.  
\end{itemize}

\begin{lemma}
Let $\alpha(t), t \in [0,1]$ be a path in the complex plane.  Let 
\[F(\alpha(t)) = \left(\begin{array}{cc}
                            A(t) & B(t) \\
                    C(t) & D(t)
                \end{array}\right) \]
be an $SL(2,\bfC)$-valued function on $\alpha(t)$ 
such that $F(\alpha(0))$=identity and $F(\alpha(t))$ 
satisfies the equation 
\[  \left(\begin{array}{cc}
                            \frac{dA}{dt} & \frac{dB}{dt} \\
                    \frac{dC}{dt} & \frac{dD}{dt}
                \end{array}\right) = \left(\begin{array}{cc}
                            A & B \\
                    C & D
                \end{array}\right)\left(\begin{array}{cc}
                            ch_1 & ch_3 \\
                    ch_2 & ch_4
                \end{array}\right) \; , \] where $c$ is a real positive 
                constant and 
     $h_i$ are functions on the complex plane satisfying the bounds  
$|h_i| < M$, $|h_i^\prime| < M_1$, $|h_i^{\prime\prime}| < M_2$,
 $|h_i^{\prime\prime\prime}| < M_3$ on $\alpha(t)$ for $i=1,2,3,4$.  Assume 
 also that $h_1$ and $h_4$ are constant functions, and choose $n \in 
 \bfZ^+$ so that 
$\frac{Mc}{n} < \frac{1}{100}$.   Applying the 
 standard Runge-Kutta algorithm on the interval $t \in [0,1]$, using 
 $n$ equally lengthed intervals, 
 let the resulting approximate value for $F(\alpha(1))$ 
 produced by the Runge-Kutta algorithm be denoted by the matrix
\[\left(\begin{array}{cc}
                            \tilde{A} & \tilde{B} \\
                    \tilde{C} & \tilde{D}
                \end{array}\right) \; . \]
Then $|A(1)-\tilde{A}|$, $|B(1)-\tilde{B}|$, 
$|C(1)-\tilde{C}|$, and $|D(1)-\tilde{D}|$ are all bounded by
$\frac{e^{2.1cM}+e^{4.1cM}}{4.2cMn^{12}} \zeta$, 
where $\zeta$ is the following polynomial:
\[ \zeta(c,n,M,M_1,M_2,M_3) = 
\frac{n^9c}{72}(96c^3M^4+144c^2M^2M_1+18cM_1^2+48cMM_2+13M_3)+
\]\[\frac{n^8c^2}{48}(32c^2M^3M_1+12cMM_1^2+
             16cM^2M_2+4M_1M_2+11MM_3)+
\]\[
\frac{n^7c^2}{384}(80c^2M^2M_1^2+8cM_1^3+
             96c^2M^3M_2+64cMM_1M_2+5M_2^2+79cM^2M_3+22M_1M_3)+
\]\[\frac{n^6c^2}{2304}(48c^2MM_1^3+336c^2M^2M_1M_2+
             48cM_1^2M_2+60cMM_2^2+272c^2M^3M_3+236cMM_1M_3+
\]\[
49M_2M_3)+
\frac{n^5c^2}{2304}(48c^2MM_1^2M_2+
\]\[
54c^2M^2M_2^2+
             15cM_1M_2^2+200c^2M^2M_1M_3+26cM_1^2M_3+84cMM_2M_3+12M_3^2)+
\]\[\frac{n^4c^3}{13824}(90cMM_1M_2^2+9M_2^3+
             156cMM_1^2M_3+450cM^2M_2M_3+105M_1M_2M_3+116MM_3^3)+
\]\[
\frac{n^3c^3}{27648}(18cMM_2^3+210cMM_1M_2M_3+
             33M_2^2M_3+216cM^2M_3^2+44M_1M_3^2)+
\]\[
\]\[\frac{n^2c^3}{27648}(33cMM_2^2M_3+44cMM_1M_3^2+13M_2M_3^2)+
\frac{nc^3}{82944}(39cMM_2M_3^2+4M_3^3)+
\frac{c^4}{20736}MM_3^3 \; \; . \]
\end{lemma}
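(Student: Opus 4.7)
The proof will be a classical global-error analysis for the fourth-order Runge--Kutta method, specialized to the linear matrix ODE $F' = cFH$. My plan has three stages: an a priori bound on $|F|$ along $\alpha$, a per-step local truncation bound obtained via Taylor expansion, and a discrete Gr\"onwall-type accumulation of the local errors across the $n$ subintervals. The exponential prefactor $e^{2.1cM} + e^{4.1cM}$ already telegraphs where Gr\"onwall will enter, and the assumption $Mc/n < 1/100$ is the small-step condition that lets one absorb the $(1 + O(cM/n))$ amplification factors at each step into a clean exponential.

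First I would bound $|F(\alpha(t))|$ in terms of $c$ and $M$. Since $F' = cFH$ with each entry $|h_i| \le M$, the operator norm of $H$ is bounded by $2M$, and Gr\"onwall applied to $|F|' \le 2cM|F|$ with $|F(0)|=1$ yields $|F(\alpha(t))| \le e^{2cMt}$. The $2.1cM$ (rather than $2cM$) provides a small cushion that will later absorb the $O(cM/n)$ factors picked up in each RK4 step; the $4.1cM$ cushion plays the same role on the step that compares the approximate $\tilde F$ to the true $F$. Next, since the RK4 method is exact through fourth-order Taylor terms, the local truncation error over a step of length $h = 1/n$ is controlled by the fifth derivative of the exact solution. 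Because $F$ satisfies the ODE, repeated differentiation of $F' = cFH$ via the product rule expresses $F^{(k)}$ as a polynomial in $c$, $F$, and $H, H', H'', \ldots, H^{(k-1)}$; substituting the hypothesized bounds $|H^{(j)}| \le M_j$ (with $M_0 = M$) and using the stage-evaluation formulas for RK4, one obtains a polynomial in $(c, M, M_1, M_2, M_3)$ bounding the local truncation error per step. The polynomial $\zeta$ in the statement is precisely what emerges from this bookkeeping, with the various monomials reflecting the combinatorics of fifth-order derivatives of a linear ODE whose drift term $H$ is itself $t$-dependent.

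Once the per-step error bound $\varepsilon_{\mathrm{loc}}$ is in hand, I would run the standard accumulation argument: if $e_k$ denotes the error at step $k$, then the RK4 update gives $e_{k+1} \le (1 + cM/n + O((cM/n)^2)) e_k + \varepsilon_{\mathrm{loc}}$; iterating and using $(1+x)^n \le e^{nx}$ together with the constraint $cM/n < 1/100$ (which lets the higher-order amplification terms be absorbed into the shift from $2cM$ to $2.1cM$, and again into $4.1cM$) yields a global error bounded by $\frac{e^{2.1cM} + e^{4.1cM}}{4.2cM} \cdot n \cdot \varepsilon_{\mathrm{loc}}$. Writing $\varepsilon_{\mathrm{loc}}$ in terms of $\zeta$ and collecting the overall $n$-dependence (each $h = 1/n$ in the Taylor remainders contributes powers of $1/n$, while the number of steps contributes one factor of $n$) produces the $1/n^{12}$ in the denominator and explains the precise powers of $n$ appearing inside $\zeta$. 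The factor of $c$ in front of the first line of $\zeta$ and of $c^2, c^3, c^4$ in subsequent lines reflect the degree in $c$ at which the corresponding Taylor coefficients enter.

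The main obstacle is the bookkeeping in stage two: expanding the RK4 four-stage formula and the exact Taylor series through fifth order, subtracting, and bounding each monomial in $c^i M^{a_0} M_1^{a_1} M_2^{a_2} M_3^{a_3}$ (subject to the degree constraints coming from differentiation of a linear ODE) is a long mechanical computation, and the only real art lies in recording the constants honestly so that the eventual polynomial $\zeta$ comes out with the stated coefficients. Everything else --- the a priori bound on $|F|$, the Gr\"onwall accumulation, and the verification that $cM/n < 1/100$ is enough to absorb the amplification into the cushion between $2cM$ and $2.1cM$ (and between that and $4.1cM$) --- is standard; I would check a couple of representative monomials by hand to confirm the coefficient pattern and otherwise present the expansion as a careful but routine computation.
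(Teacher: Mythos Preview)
Your proposal is correct and follows essentially the same route as the paper: an a priori bound on the entries of $F$, an explicit Taylor-expansion computation of the local truncation error (this is where $\zeta$ comes from, and the paper likewise calls it ``a direct (but long) calculation''), and a discrete Gr\"onwall accumulation using $Mc/n < 1/100$ to replace $1 + \tfrac{2cM}{n} + O((\tfrac{cM}{n})^2)$ by $1 + \tfrac{2.1cM}{n}$.

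One small correction to your reading of the exponential prefactor: $e^{2.1cM}+e^{4.1cM}$ is not two separate ``cushions'' but simply the product $e^{2.1cM}\cdot(1+e^{2cM})$. The paper's componentwise a priori bound is $|A(t)|,|B(t)| \le \tfrac{1}{2}(1+e^{2cMt})$ (sharper than $e^{2cMt}$ because $A(0)=1$, $B(0)=0$), and this factor $(1+e^{2cM})/2$ enters the local truncation bound linearly; the accumulation step then contributes a single factor $\tfrac{n}{2.1cM}e^{2.1cM}$, and the product of the two gives $\tfrac{e^{2.1cM}+e^{4.1cM}}{4.2cM\,n^{12}}\zeta$.
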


We include the condition that $h_1$ and $h_4$ are constant 
in Lemma 6.1, because this is sufficient for our application, and this 
will later allow 
us to assume a smaller lower bound for $n$.  It also simplifies the 
 proof somewhat.  However, it is not necessary to assume $h_1$ and $h_4$ 
 are constant in order to produce a lemma of this type.

\begin{proof}
The system of equations in the lemma can be
separated into two systems of two equations each -- one the systems
with variables $A$ and $B$, and the other with $C$ and $D$.  We
consider now the system involving $A$ and $B$:
\[ \frac{dA}{dt} = c h_1 A + 
              c h_2 B \; \; , \; 
\; \frac{dB}{dt} = c h_3 A + c h_4 
B \; \; . \]

Since $|h_i| < M$ for all $t \in [0,1]$ and all 
$i = 1,2,3,4$, we conclude that 
\[ \left|\frac{dA}{dt}\right| \leq cM |A| + cM |B| \; \; , \; \; 
\left|\frac{dB}{dt}\right| \leq cM |A| + cM |B| \; \; . \]
If we replace the inequalities in these equations by equalities, we
would be able to evaluate the system explicitly with 
$A(0) = 1$ and $B(0) = 0$.  
It follows that 
\[ |A(t)| \leq 1 + \frac{1}{2} 
\sum_{j=1}^\infty \frac{(2tcM)^{j}}{j!} 
= \frac{1}{2} + \frac{1}{2} e^{2tcM} 
\; \; , \; \; \; \; |B(t)| \leq \frac{1}{2} 
\sum_{j=1}^\infty \frac{(2tcM)^{j}}{j!} 
= - \frac{1}{2} + \frac{1}{2} e^{2tcM} 
\; \; . \]

Now we run the standard Runge-Kutta algorithm on $t \in [0,1]$ 
for a system of two equations 
with $n$ steps of equal size $\frac{1}{n}$.  
The initial conditions are $A_0 = 1$ and
$B_0 = 0$.  
The algorithm at step $k$ is this:
\[ k_0 = \frac{c}{n}(h_1(\frac{k}{n})A_k+h_2(\frac{k}{n})B_k) \; , \; \; 
m_0 = \frac{c}{n}(h_3(\frac{k}{n})A_k+h_4(\frac{k}{n})B_k) \; , \]
\[ k_1 = \frac{c}{n}(h_1(\frac{k}{n}+\frac{1}{2n})(A_k+\frac{1}{2}k_0)+
           h_2(\frac{k}{n}+\frac{1}{2n})(B_k+\frac{1}{2}m_0)) \; , \]
\[ m_1 = \frac{c}{n}(h_3(\frac{k}{n}+\frac{1}{2n})(A_k+\frac{1}{2}k_0)+
           h_4(\frac{k}{n}+\frac{1}{2n})(B_k+\frac{1}{2}m_0)) \; , \]
\[ k_2 = \frac{c}{n}(h_1(\frac{k}{n}+\frac{1}{2n})(A_k+\frac{1}{2}k_1)+
           h_2(\frac{k}{n}+\frac{1}{2n})(B_k+\frac{1}{2}m_1)) \; , \]
\[ m_2 = \frac{c}{n}(h_3(\frac{k}{n}+\frac{1}{2n})(A_k+\frac{1}{2}k_1)+
           h_4(\frac{k}{n}+\frac{1}{2n})(B_k+\frac{1}{2}m_1)) \; , \]
\[ k_3 = \frac{c}{n}(h_1(\frac{k}{n}+\frac{1}{n})(A_k+k_2)+
           h_2(\frac{k}{n}+\frac{1}{n})(B_k+m_2)) \; , \]
\[ m_3 = \frac{c}{n}(h_3(\frac{k}{n}+\frac{1}{n})(A_k+k_2)+
           h_4(\frac{k}{n}+\frac{1}{n})(B_k+m_2)) \; , \]
\[ A_{k+1} = A_k + \frac{1}{6}(k_0 + 2k_1 + 2k_2 + k_3) \; , \; \; 
B_{k+1} = B_k + \frac{1}{6}(m_0 + 2m_1 + 2m_2 + m_3) \; . \]
We define the local discretization errors for $A$
and $B$ to be 
\[ d_{k+1}^A := A(\frac{k+1}{n}) - A(\frac{k}{n}) - 
\frac{1}{6}(\hat{k}_0 + 2\hat{k}_1 + 2\hat{k}_2 + \hat{k}_3) 
\; \; ,\] 
\[ d_{k+1}^B := B(\frac{k+1}{n}) - B(\frac{k}{n}) - 
\frac{1}{6}(\hat{m}_0 + 2\hat{m}_1 + 2\hat{m}_2 + \hat{m}_3)
\; \; ,\] where 
\[ \hat{k}_0 = \frac{c}{n}(h_1(\frac{k}{n})A(\frac{k}{n})+
h_2(\frac{k}{n})B(\frac{k}{n})) \; , \; \; 
\hat{m}_0 = \frac{c}{n}(h_3(\frac{k}{n})A(\frac{k}{n})+h_4(\frac{k}{n})B(\frac{k}{n})) \; , \]
\[ \hat{k}_1 = \frac{c}{n}(h_1(\frac{k}{n}+\frac{1}{2n})(A(\frac{k}{n})+\frac{1}{2}\hat{k}_0)+
h_2(\frac{k}{n}+\frac{1}{2n})(B(\frac{k}{n})+\frac{1}{2}\hat{m}_0)) \; , \]
and $\hat{m}_1, \hat{k}_2, \hat{m}_2, \hat{k}_3, \hat{m}_3$ are 
defined similarly, analogous to the way $m_1, k_2, m_2, k_3, m_3$ 
were defined.  
We define the maximums of the local discretization errors by 
\[ D^A := \max_k{|d_k^A|} \; \; , \; \; D^B := \max_k{|d_k^B|} \; \; ,
\; \; D := \max(D^A,D^B) \; \; .
\] 
We define the global discretization errors by 
\[ g_k^A := A(\frac{k}{n}) - A_k \; \; , \; \; 
g_k^B := B(\frac{k}{n}) - B_k \; \; , \; \; 
g_k := \max(|g_k^A|,|g_k^B|) \; \; .
\]
Since $A(\frac{k+1}{n}) = A(\frac{k}{n}) + 
\frac{1}{6}(\hat{k}_0 + 2\hat{k}_1 + 2\hat{k}_2 + \hat{k}_3)
+ d_{k+1}^A$,
we have that 
\[ g_{k+1}^A = g_{k}^A + 
\frac{1}{6}(\hat{k}_0 + 2\hat{k}_1 + 2\hat{k}_2 + \hat{k}_3)
-\frac{1}{6}(k_0 + 2k_1 + 2k_2 + k_3)
+ d_{k+1}^A \; \; , \] and therefore we can compute that 
\[ |g_{k+1}^A| \leq |g_{k}^A| + 
(\frac{cM}{n}+\frac{c^2M^2}{n^2}+\frac{2c^3M^3}{3n^3}+\frac{c^4M^4}{3n^4})
|A(\frac{k}{n}) - A_k| + \]\[
(\frac{cM}{n}+\frac{c^2M^2}{n^2}+\frac{2c^3M^3}{3n^3}+\frac{c^4M^4}{3n^4})
|B(\frac{k}{n}) - B_k| + |d_{k+1}^A| \; \; . \]
We assumed that $n > 100cM$, so we have 
\[ |g_{k+1}^A| \leq (1 + \frac{1.05cM}{n}) |g_{k}^A| + \frac{1.05cM}{n} 
|g_{k}^B| + D^A \; \; . \]
Similarly, 
\[ |g_{k+1}^B| \leq \frac{1.05cM}{n} |g_{k}^A| + (1 + \frac{1.05cM}{n}) 
|g_{k}^B| + D^B \; \; . \]
Thus, 
\[ g_{k+1} \leq (1 + \frac{2.1cM}{n}) g_{k} + D \; \; . \]
By repeated application of this inequality we have 
\[ g_n \leq (1 + \frac{2.1cM}{n})^n g_0 + 
\frac{(1 + \frac{2.1cM}{n})^n - 1}{\frac{2.1cM}{n}} D \; \; .\]
And since $g_0 = 0$, we have 
\[ g_n \leq \frac{e^{\frac{2.1cMn}{n}} - 1}{\frac{2.1cM}{n}} D 
< \frac{n e^{2.1cM}}{2.1cM} D\; \; . \]
Here we have used the fact that $e^x$ is convex on $\bfR$ and therefore $1+x
\leq e^x$ and therefore also $(1+x)^n \leq (e^x)^n = e^{xn}$ for any 
positive $x$.  

Note that $h_i(\frac{k}{n}+\frac{1}{2n})$ and 
$h_i(\frac{k}{n}+\frac{1}{n})$ and 
$A(\frac{k}{n}+\frac{1}{n})$ have the following Taylor expansions:
\[ h_i(\frac{k}{n}+\frac{1}{2n}) = h_i(\frac{k}{n}) + 
\frac{1}{2n} h_i^\prime(\frac{k}{n})
+ \frac{1}{8n^2} h_i^{\prime\prime}(\frac{k}{n})
+ \frac{1}{48n^3} h_i^{\prime\prime\prime}(\frac{k}{n}+\theta \frac{1}{2n})
\; , \]
\[ h_i(\frac{k}{n}+\frac{1}{n}) = h_i(\frac{k}{n}) + 
\frac{1}{n} h_i^\prime(\frac{k}{n})
+ \frac{1}{2n^2} h_i^{\prime\prime}(\frac{k}{n})
+ \frac{1}{6n^3} h_i^{\prime\prime\prime}(\frac{k}{n}+\theta \frac{1}{n})
\; , \] 
\[ A(\frac{k}{n}+\frac{1}{n}) = A(\frac{k}{n}) + \frac{1}{n} A^\prime(\frac{k}{n})
+ \frac{1}{2n^2} A^{\prime\prime}(\frac{k}{n})
+ \frac{1}{6n^3} A^{\prime\prime\prime}(\frac{k}{n})
+ \frac{1}{24n^4} A^{\prime\prime\prime\prime}(\frac{k}{n}+\theta \frac{1}{n})
\; , \]  for some values of $\theta \in [0,1]$.  Here the symbol $\prime$ 
denotes derivative with respect to $t$.  

Repeatedly using that 
$A^\prime = c h_1 A + c h_2 B$ and 
$B^\prime = c h_3 A + c h_4 B$, the above Taylor expansion for 
$A(\frac{k}{n}+\frac{1}{n})$ can be rewritten in a longer form 
so that it does not contain 
any terms of the form $A^{\prime}$, $B^{\prime}$, 
$A^{\prime\prime}$, $B^{\prime\prime}$, 
$A^{\prime\prime\prime}$, $B^{\prime\prime\prime}$, 
$A^{\prime\prime\prime\prime}$, or $B^{\prime\prime\prime\prime}$.  
Using this longer form for $A(\frac{k}{n}+\frac{1}{n})$, and using the 
above Taylor expansions for 
$h_i(\frac{k}{n}+\frac{1}{2n})$ and 
$h_i(\frac{k}{n}+\frac{1}{n})$, 
we can make a direct (but long) calculation to determine $d_{k+1}^A$ and 
$d_{k+1}^B$ in terms of $A$, $B$, $n$, $c$, $h_i$, and the 
derivatives (up to third order) 
of $h_i$.  These formulas are extremely long, so we 
do not include them here.  However, 
for each of these formulas we can take the sum of the absolute 
values of all of the terms, and make the following replacements:
$|h_i|$ by its upper bound $M$, $|h_i^{\prime}|$ 
by its upper bound $M_1$, $|h_i^{\prime\prime}|$ 
by its upper bound $M_2$, $|h_i^{\prime\prime\prime}|$ 
by its upper bound $M_3$, and $|A|$ and $|B|$ by their 
upper bound $\frac{1+e^{2cM}}{2}$.  We then 
get upper bounds for $|d_{k+1}^A|$ and 
$|d_{k+1}^B|$.  We can then find that a sufficient upper bound for both
$|d_{k+1}^A|$ and 
$|d_{k+1}^B|$ is $D \leq \frac{1+e^{2cM}}{2n^{13}}\zeta$.  
So we have that 
\[ g_n < \frac{ne^{2.1cM}}{2.1cM} D 
\leq \frac{ne^{2.1cM}}{2.1cM} \frac{1+e^{2cM}}{2n^{13}}\zeta
 \; . \]  

An identical 
argument gives the same conclusion for $C$ and $D$.
\end{proof}

\begin{lemma}
Suppose that the conditions of Lemma 6.1 hold for all $c$ 
contained in some interval $[c_1,c_2]$.  Then 
$|\frac{\partial \tilde{A}}{\partial c}|$, 
$|\frac{\partial \tilde{B}}{\partial c}|$, 
$|\frac{\partial \tilde{C}}{\partial c}|$, and 
$|\frac{\partial \tilde{D}}{\partial c}|$ are all bounded by 
$2.48Me^{2.4Mc}$ for all $c \in [c_1,c_2]$.
\end{lemma}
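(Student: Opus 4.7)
The plan is to differentiate the Runge-Kutta recursion displayed in the proof of Lemma 6.1 with respect to $c$ and then run a discrete Gronwall argument parallel to the one used there. Write $A'_k := \partial A_k/\partial c$, $B'_k := \partial B_k/\partial c$, and similarly for $C, D$; since the initial conditions $A_0 = 1$, $B_0 = 0$, $C_0 = 0$, $D_0 = 1$ are $c$-independent, one has $A'_0 = B'_0 = C'_0 = D'_0 = 0$. The proof of Lemma 6.1 already gives $|A_k|+|B_k| \leq e^{2cMk/n}$ and $|C_k|+|D_k| \leq e^{2cMk/n}$, up to a discretization error of order $n^{-12}$ which is negligible under the hypothesis $cM/n < 1/100$.

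Next, differentiate each of the eight intermediate quantities $k_0, m_0, k_1, m_1, k_2, m_2, k_3, m_3$ with respect to $c$. Each such derivative splits into a ``source'' piece (from $\partial/\partial c$ hitting the explicit prefactor $c/n$) and a ``propagation'' piece (from $\partial/\partial c$ hitting $A_k, B_k$, or the already-computed nested stages). Using $|h_i| \leq M$ and substituting into
\[ A'_{k+1} = A'_k + \frac{1}{6}\left(\frac{\partial k_0}{\partial c} + 2\frac{\partial k_1}{\partial c} + 2\frac{\partial k_2}{\partial c} + \frac{\partial k_3}{\partial c}\right) \]
and its analogue for $B'_{k+1}$, one obtains an estimate of the form
\[ |A'_{k+1}| + |B'_{k+1}| \leq \left(1 + \frac{\alpha c M}{n}\right)\left(|A'_k|+|B'_k|\right) + \frac{\beta M}{n}\left(|A_k| + |B_k|\right), \]
where $\alpha, \beta$ are constants only slightly larger than $2$; the hypothesis $cM/n < 1/100$ lets one absorb all of the $O((cM/n)^j)$ corrections coming from the nested RK stages into a negligible additive contribution, exactly as in Lemma 6.1.

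Iterating this recursion from $A'_0 + B'_0 = 0$, using $(1+x/n)^n \leq e^x$ together with the $|A_k|+|B_k| \leq e^{2cMk/n}$ bound, the resulting telescoping sum gives
\[ |A'_n| + |B'_n| \leq \frac{\beta M}{n}\sum_{k=0}^{n-1} e^{\alpha c M(n-1-k)/n + 2cMk/n} \leq \beta M e^{\alpha cM}. \]
Choosing $\alpha \leq 2.4$ and $\beta \leq 2.48$ compatibly with the arithmetic above then yields the claimed bound on each of $|\tilde A'|$ and $|\tilde B'|$, and an identical argument applied to the $(C,D)$ subsystem completes the proof.

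The main technical obstacle is the careful bookkeeping required to verify that the constants really do fit inside $2.4$ and $2.48$: for each of the four nested Runge-Kutta stages one must enumerate every term that contributes to the coefficient of $cM/n$ in the propagation part of the inequality and check that the total is controlled by $cM/n < 1/100$, so that $\alpha$ and $\beta$ stay just barely above $2$ - precisely in the spirit of the $1.05$ fudge factor used in the proof of Lemma 6.1.
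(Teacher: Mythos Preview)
Your approach is correct, but it is genuinely different from the paper's. The paper does \emph{not} run a step-by-step Gronwall argument on the differentiated recursion. Instead, it first unwinds the full $n$-step Runge--Kutta iteration into a closed-form polynomial expression $A_n = A_0 + p(\mathcal{Q})A_0 + p(\mathcal{Q})B_0$, where each one-step increment is packaged into a symbol $\mathcal{Q}$ satisfying $|\mathcal{Q}|<1.2\,cM/n$ and $|\partial\mathcal{Q}/\partial c|<\tfrac{M}{n}(1+2.4\,cM/n)$, and where $p$ contains exactly $\tfrac{2^{j-1}n(n-1)\cdots(n-j+1)}{j!}$ monomials of degree $j$. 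It then differentiates this polynomial term by term and bounds the resulting sum by $2.48\,Me^{2.4Mc}$ using $n!/(n-j)!\le n^j$ and the exponential series.

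Your route---differentiate the one-step map, obtain the inhomogeneous linear recursion $T_{k+1}\le(1+\alpha cM/n)T_k+(\beta M/n)S_k$, and iterate---is the standard variation-of-parameters or sensitivity-equation method, and it lands on exactly the same constants once one observes that the one-step increment satisfies $|\mathcal{Q}|\le 1.2\,cM/n$ (giving $\alpha=2.4$) and $|\partial\mathcal{Q}/\partial c|\le\tfrac{M}{n}(1+2.4\,cM/n)$ (giving $\beta\le 2.048<2.48$). One small simplification: you need not appeal to Lemma~6.1 for the bound $|A_k|+|B_k|\le e^{2cMk/n}$ on the \emph{discrete} iterates; the one-step relation $S_{k+1}\le(1+2q)S_k$ with $q=|\mathcal{Q}|$ gives $S_k\le e^{2.4cMk/n}$ directly, which is all the telescoping sum requires. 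Either method works; yours is more in the spirit of a standard numerical-analysis textbook, while the paper's combinatorial expansion makes the dependence on $n$ and the RK structure more explicit.
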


\begin{proof}
We consider the system of two equations for $A$ and $B$ here.  The case 
for $C$ and $D$ is identical.

Recall that the Runge-Kutta algorithm is 
\[ A_{j+1} = A_j + \frac{1}{6}(k_0 + 2k_1 + 2k_2 + k_3), \; \; 
B_{j+1} = B_j + \frac{1}{6}(m_0 + 2m_1 + 2m_2 + m_3) \; . \]
We can expand out the terms of these two equations so that everything is 
written in terms of only $A_j, B_j, A_{j+1}, B_{j+1}$ and 
$h_i, c, n$.  We then have
\[ A_{j+1} = A_j + {\cal Q} A_j + {\cal Q} B_j \; , \; \; 
B_{j+1} = B_j + {\cal Q} A_j + {\cal Q} B_j \; , \]
where $\cal Q$ is a polynomial that consists of the sum of 
two terms of the form 
$\frac{ch_*(*)}{6n}$, two terms of the form $\frac{ch_*(*)}{3n}$, 
six terms of the form $\frac{c^2h_*(*)h_*(*)}{6n^2}$, 
eight terms of the form $\frac{c^3h_*(*)h_*(*)h_*(*)}{12n^3}$, 
and eight terms of the form 
$\frac{c^4h_*(*)h_*(*)h_*(*)h_*(*)}{24n^4}$.  We will use the 
symbol $\cal Q$ to notate any polynomial of this form, regardless of 
what the subindices are for the functions $h_i(z)$ and regardless of at 
which 
value of $z(t)$ we are evaluating the functions $h_i(z)$.  (It is for this
reason that we are notating the functions $h_i(z)$ merely as $h_*(*)$.)
Although $\cal Q$ is not a well-defined notation, for our 
purposes this level of notation will be sufficient.  
It follows from the assumptions $|h_i| < M$ and $c>0$ and $\frac{Mc}{n} < 
\frac{1}{100}$ that $|{\cal Q}| < 1.2\frac{Mc}{n}$ and 
$|\frac{\partial {\cal Q}}{\partial c}| < 
\frac{M}{n}(1+2.4\frac{Mc}{n})$, regardless of what the indices of $h_i$ 
are and regardless of at which values of $z(t)$ we evaluate the functions 
$h_i$.

Applying the Runge-Kutta algorithm on $n$ equally lengthed steps, we find 
that the resulting estimates for $A$ and $B$ at $\alpha(1)$ are
of the form
\[ A_n = A_0 + p({\cal Q}) A_0 + p({\cal Q}) B_0 \; , \; \; 
B_n = B_0 + p({\cal Q}) A_0 + p({\cal Q}) B_0 \; , \]
where $p({\cal Q})$ is a polynomial in $\cal Q$ with 
$\frac{2^{j-1}n(n-1)(n-2)\cdot\cdot\cdot(n-j+1)}{j!}$ terms of the form 
${\cal Q}^j$ for each $j=1,...,n$.  Thus is follows that 
\[ \left| \frac{\partial A_n}{\partial c} \right|, 
\left| \frac{\partial B_n}{\partial c} \right| 
\leq 2 \sum_{j=1}^{n} 
\frac{2^{j-1}n(n-1)(n-2)\cdot\cdot\cdot(n-j+1)}{j!} 
\left|\frac{\partial}{\partial c}({\cal Q}^j)\right| \leq \]\[
\sum_{j=1}^{n} 
\frac{2^{j}n(n-1)(n-2)\cdot\cdot\cdot(n-j+1)}{j!} 
j \left|{\cal Q}\right|^{j-1} 
\left|\frac{\partial {\cal Q}}{\partial c}\right|
\leq \]\[ \sum_{j=1}^{n} 
\frac{2^{j}n(n-1)(n-2)\cdot\cdot\cdot(n-j+1)}{j!} 
j (1.2\frac{Mc}{n})^{j-1} (\frac{M}{n}(1+2.4\frac{Mc}{n})) \leq \]\[
\sum_{j=1}^{n} 
\frac{2^{j}}{(j-1)!} 
(1.2Mc)^{j-1} (M(1+2.4\frac{Mc}{n})) \leq
2 M (1+2.4\frac{Mc}{n}) \sum_{j=1}^{n} 
\frac{1}{(j-1)!} (2.4Mc)^{j-1} < \]\[
2.48 M \sum_{j=0}^{n-1} \frac{1}{j!} (2.4Mc)^{j} <
2.48 M \sum_{j=0}^{\infty} \frac{1}{j!} 
(2.4Mc)^{j} = 2.48 M e^{2.4Mc} \; . \]
\end{proof}

We are now in a position to prove Theorem 1.2:

\begin{proof}
Recall that our surface is described by the equation 
\[  \left(\begin{array}{cc}
                            \frac{dA}{dz} & \frac{dB}{dz} \\
                    \frac{dC}{dz} & \frac{dD}{dz}
                \end{array}\right) = \left(\begin{array}{cc}
                            A & B \\
                    C & D
                \end{array}\right)\left(\begin{array}{cc}
                            g & -g^2 \\
                    1 & -g
                \end{array}\right) \frac{c}{g} \; , \]
where $g = \sqrt{\frac{(z+1)(z-a)}{(z-1)(z+a)}}$.
This system separates into two systems -- one 
involving $A$ and $B$, and the other involving $C$ and $D$.  We
consider now the system involving $A$ and $B$:
\[ \frac{dA}{dz} = c A + \frac{c}{g} B \; \; , \; \; 
\frac{dB}{dz} = - c g A - c B \; \; . \]
Note that if we wish to evaluate this system along a curve from $z_0$
to $z_1$, linearly defined as $\frac{b-t}{b-a}z_0 + \frac{t-a}{b-a}
z_1, t \in [a,b]$, then
$\frac{dz}{dt} = \frac{z_1 - z_0}{b-a}$ for all 
$t \in [a,b]$.  So our system can
then be written as
\[ \frac{dA}{dt} = c \frac{z_1-z_0}{b-a} A + 
              c \frac{z_1-z_0}{b-a}\frac{1}{g} B \; \; , \; 
\; \frac{dB}{dt} = - c \frac{z_1-z_0}{b-a} g A - c \frac{z_1-z_0}{b-a} 
B \; \; . \]
Let $h_1 = \frac{z_1-z_0}{b-a}$, $h_2 = \frac{z_1-z_0}{b-a}
 \frac{1}{g}$, $h_3 =  -
\frac{z_1-z_0}{b-a} g$, and $h_4 =  - \frac{z_1-z_0}{b-a}$.

Now we choose the paths $\alpha_1(t)$ and $\alpha_2(t)$ for $t \in [0,1]$.  
The paths will start at the point $\alpha_1(0) = \alpha_2(0) = (0,1)$ in 
the base Reimann surface ${\cal M}_{a}^2$ and will 
be defined by their $z$ 
coordinates.  The $z$ coordinates of the paths will be polygonal and $t$ 
will be defined linearly with respect to $z$-length on each line segment.  
The path $\alpha_1(t)$  will project to a line segment from
$z=0$ $(t=0)$ to $z=1+0.4i$ $(t=0.67)$, then a line segment from $z=1+0.4i$ 
$(t=0.67)$ to $z=\frac{1}{2}(1+a)$ $(t=1)$.
The path $\alpha_2(t)$ will project to a line segment from
$z=0$ $(t=0)$ to $z=(a+0.2)+0.7i$ $(t=0.686)$, then a line segment from 
$z=(a+0.2)+0.7i$ $(t=0.686)$ to $z=a+\frac{1}{2}$ $(t=1)$.

We now solve Bryants differential equation along $\alpha_j(t)$.  At the 
beginning point 
$(z,w)=(0,1)$, that is, at $t=0$, the initial condition 
will be $F=$identity.
Suppose that 
the true value of $F$ at the endpoints is 
\[               F(\alpha_j(1)) = \left(\begin{array}{cc}
                            A_j & B_j\\
                    C_j & D_j
                \end{array}\right) \; , \; \; \; j = 1,2  \; , \]
and suppose that 
the approximate value of $F$ at the endpoints produced by the Runge-Kutta 
algorithm using $n$ steps of equal length is 
\[               \left(\begin{array}{cc}
                            \tilde{A}_j & \tilde{B}_j\\
                    \tilde{C}_j & \tilde{D}_j
                \end{array}\right) \; , \; \; \; j = 1,2  \; . \]
Of course, the exact values of $\tilde{A}_j, \tilde{B}_j, \tilde{C}_j, 
\tilde{D}_j$ cannot be computed on a computer, but by considering the 
possible round-off error for each mathematical operation in the algorithm, 
and keeping track of the possible cumulative round-off error, we can find 
intervals in which the values $\tilde{A}_j, \tilde{B}_j, \tilde{C}_j, 
\tilde{D}_j$ must lie.  That is, we can find real values 
$\tilde{A}_j^{ur}, \tilde{A}_j^{ui}, \tilde{A}_j^{lr}, \tilde{A}_j^{li}, 
\tilde{B}_j^{ur}, \tilde{B}_j^{ui}, \tilde{B}_j^{lr}, \tilde{B}_j^{li}, 
\tilde{C}_j^{ur}, \tilde{C}_j^{ui}, \tilde{C}_j^{lr}, \tilde{C}_j^{li}, 
\tilde{D}_j^{ur}, \tilde{D}_j^{ui}, \tilde{D}_j^{lr}, \tilde{D}_j^{li}$ 
such that 
\[ \tilde{{\cal I}}_j^{lr} \leq \mbox{Re}(\tilde{{\cal I}}_j) \leq 
\tilde{{\cal I}}_j^{ur} \; , \; \; \tilde{{\cal I}}_j^{li} \leq 
\mbox{Im}(\tilde{{\cal I}}_j) \leq \tilde{{\cal I}}_j^{ui} \; , \] 
for ${\cal I} = A, B, C, D$ and 
$j=1,2$, thus giving us a total of 16 equations.  
(These bounds can be computed 
using code written in a programming language such as C++ or 
Fortran.  They also could be computed using a shorter code written in a 
scientific programming language such as CXSC or PROFIL.)  
To explain why we write the 
superscripts in this way, the first letter is either $u$ or $l$ to denote 
either $u$pper bound or $l$ower bound, and the second letter is either 
$r$ or $i$ to denote either $r$eal part or $i$maginary part.

Choosing $a$ to be 1.78, we find that we have the following 
bounds for both paths: $M=4.6$, $M_1 = 48$, 
$M_2 = 850$, $M_3 = 25000$.  (These bounds are defined in 
Lemma~6.1.)  Then, if we choose any 
$c \in [0.0495,0.0505]$, from Lemma 6.1 we see that if 
$n > 500$, the errors incurred by 
the Runge-Kutta algorithm on $A_j$, $B_j$, $C_j$, and $D_j$ 
are less than $\epsilon$=0.00001.  That is, 
$|\tilde{A}_j-A_j| < \epsilon$,
$|\tilde{B}_j-B_j| < \epsilon$,
$|\tilde{C}_j-C_j| < \epsilon$, and 
$|\tilde{D}_j-D_j| < \epsilon$, for all $c \in 
[0.0495,0.0505]$.  It follows that 
\[ \tilde{{\cal I}}_j^{lr}-\epsilon \leq \mbox{Re}({\cal I}_j) \leq 
\tilde{{\cal I}}_j^{ur}+\epsilon \; , \; \; 
\tilde{{\cal I}}_j^{li}-\epsilon 
\leq \mbox{Im}({\cal I}_j) \leq 
\tilde{{\cal I}}_j^{ui}+\epsilon \; , \] 
for ${\cal I} = A, B, C, D$ and $j=1,2$.

The value of $a$ is fixed, but $c$ can be any value in the range
$[0.0495,0.0505]$, so the numbers $A_{j}$, $\ldots$, $D_{j}$, 
$\tilde{A}_j$,
$\ldots$, $\tilde{D}_j$, $\tilde{A}_j^{ur}$, $\ldots$, $\tilde{D}_j^{li}$ 
are all
functions of $c$.  To clearly show this dependence, we shall denote
these numbers by $A_{j}(c)$, $\ldots$, $D_{j}(c)$, $\tilde{A}_j(c)$,
$\ldots$, $\tilde{D}_j(c)$, $\tilde{A}_j^{ur}(c)$, $\ldots$, 
$\tilde{D}_j^{li}(c)$ for the rest of this proof.

By Lemma 6.2, 
$|\frac{\partial \tilde{A}_j(c)}{\partial c}|$, 
$|\frac{\partial \tilde{B}_j(c)}{\partial c}|$, 
$|\frac{\partial \tilde{C}_j(c)}{\partial c}|$, and 
$|\frac{\partial \tilde{D}_j(c)}{\partial c}|$ are all bounded by 
$2.48Me^{2.4Mc} < 20$ for all $c \in [0.0495,0.0505]$.  So, it follows 
that if we choose any $c \in [0.04999,0.05001]$, 
we have that $\tilde{A}_j(c)$, $\tilde{B}_j(c)$, $\tilde{C}_j(c)$, and 
$\tilde{D}_j(c)$ can vary from their values at $c=0.05$ by at most 
$\hat{\epsilon}=$0.0002.  That is, 
$|\tilde{A}_j(c)-\tilde{A}_j(0.05)| < \hat{\epsilon}$,
$|\tilde{B}_j(c)-\tilde{B}_j(0.05)| < \hat{\epsilon}$,
$|\tilde{C}_j(c)-\tilde{C}_j(0.05)| < \hat{\epsilon}$, and 
$|\tilde{D}_j(c)-\tilde{D}_j(0.05)| < \hat{\epsilon}$, for all $c \in 
[0.04999,0.05001]$.  We conclude that 
\[ \tilde{{\cal I}}_j^{lr}(0.05)-\epsilon -\hat{\epsilon} \leq 
\mbox{Re}({\cal I}_j(c)) \leq 
\tilde{{\cal I}}_j^{ur}(0.05)+\epsilon +\hat{\epsilon} \; , 
\]\[ \; \; \tilde{{\cal I}}_j^{li}(0.05)-\epsilon 
-\hat{\epsilon} 
\leq \mbox{Im}({\cal I}_j(c)) \leq 
\tilde{{\cal I}}_j^{ui}(0.05)+\epsilon +\hat{\epsilon} \; , \] 
for ${\cal I} = A, B, C, D$ and $j=1,2$ and all 
$c \in [0.04999,0.05001]$.  This is 
sufficient to conclude that both 
$f_1, f_2 \in (2,+\infty)$ for all $c \in [0.04999,0.05001]$.
Checking in this way on many small intervals (a finite number of 
intervals), we can conclude that 
$2<f_1,f_2<\infty$ for all $c \in [0.0495,0.0505]$.

Then, as we saw before, solving the period problem means solving
$f_1 = f_2 > 2$.  Running the Runge-Kutta algorithm with 
$c=0.0495$, we conclude that 
\[ \tilde{{\cal I}}_j^{lr}(0.0495)-\epsilon \leq 
\mbox{Re}({\cal I}_j(0.0495)) \leq 
\tilde{{\cal I}}_j^{ur}(0.0495)+\epsilon \; ,
\]\[ \; \; \tilde{{\cal I}}_j^{li}(0.0495)-\epsilon 
\leq \mbox{Im}({\cal I}_j(0.0495)) \leq 
\tilde{{\cal I}}_j^{ui}(0.0495)+\epsilon \; , \] 
for ${\cal I} = A, B, C, D$ and $j=1,2$.  
These estimates 
are sufficient to show that $f_1 > f_2$ at $c=0.0495$.  Similarly we can 
show that $f_1 < f_2$ at $c=0.0505$.  We conclude that there exists a 
value of $c \in [0.0495,0.0505]$ so that $f_1 = f_2 > 2$.  
We have thus shown of existence of at 
least one genus 1 catenoid cousin.  Then, since the problem is continuous 
in $a$, we know that for all $a$ sufficiently close to 1.78 there exists a 
positive value for $c$ so that $f_1 = f_2 > 2$.  This proves existence of a 
one-parameter family of genus 1 catenoid cousins.
\end{proof}

\end{document}